\documentclass[pdflatex,sn-mathphys-num]{sn-jnl}

\usepackage{bm,comment,multirow,mathtools} 
\usepackage{calc,enumitem,subcaption,tabularx} 

\usepackage{graphicx}%
\usepackage{amsmath,amssymb,amsfonts}%
\usepackage{amsthm}%
\usepackage{mathrsfs}%
\usepackage[title]{appendix}%
\usepackage{xcolor}%
\usepackage{textcomp}%
\usepackage{manyfoot}%
\usepackage{booktabs}%
\usepackage{algorithm}%
\usepackage{algorithmicx}%
\usepackage{algpseudocode}%
\usepackage{listings}%

\theoremstyle{thmstyleone}%
\newtheorem{theorem}{Theorem}
\newtheorem{proposition}{Proposition}
\newtheorem{lemma}{Lemma} 

\theoremstyle{thmstyletwo}%

\theoremstyle{thmstylethree}%
\newtheorem{definition}{Definition}%

\makeatletter
\def\orcid#1{\href{https://orcid.org/#1}{\includegraphics[height=1em]{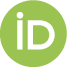}}}
\makeatother

\begin{document}

\title[Distributionally robust optimization for recommendation selection]
{Distributionally robust optimization for recommendation selection}


\author[1]{\fnm{Tomoya} \sur{Yanagi}}
\author[2]{\fnm{Shunnosuke} \sur{Ikeda} \orcid{0009-0004-4283-0819}}
\author[3]{\fnm{Ken} \sur{Kobayashi} \orcid{0000-0002-6609-7488}}
\author*[2,4]{\fnm{Yuichi} \sur{Takano} \orcid{0000-0002-8919-1282}}

\affil[1]{\orgdiv{Graduate School of Science and Technology}, \orgname{University of Tsukuba}, \orgaddress{\street{1--1--1 Tennodai}, \city{Tsukuba-shi}, \postcode{305--8573}, \state{Ibaraki}, \country{Japan}}}

\affil[2]{\orgdiv{Institute of Systems and Information Engineering}, \orgname{University of Tsukuba}, \orgaddress{\street{1--1--1 Tennodai}, \city{Tsukuba-shi}, \postcode{305--8573}, \state{Ibaraki}, \country{Japan}}}

\affil[3]{\orgdiv{Department of Industrial Engineering and Economics}, \orgname{Institute of Science Tokyo}, \orgaddress{\street{2--12--1 Ookayama}, \city{Meguro-ku}, \postcode{152--8552}, \state{Tokyo}, \country{Japan}}}

\affil[4]{\orgdiv{Center for Artificial Intelligence Research, Tsukuba Institute for Advanced Research (TIAR)}, \orgname{University of Tsukuba}, \orgaddress{\street{1--1--1 Tennodai}, \city{Tsukuba-shi}, \postcode{305--8577}, \state{Ibaraki}, \country{Japan}}}


\abstract{
Recommender systems play an essential role in online services by providing personalized item lists to support users' decision-making processes. 
While collaborative filtering methods can achieve high accuracy, it is crucial to consider not only accuracy but also the diversity of recommended items to improve user satisfaction. 
Although financial portfolio theory has been applied to balance these factors, existing models are often sensitive to estimation errors in rating statistics.
To overcome these challenges, we establish a computational framework of distributionally robust optimization (DRO) for recommendation selection. 
We first formulate a cardinality-constrained DRO model based on moment-based ambiguity sets to select a specified number of items for each user. 
We then design a penalty alternating direction method (PADM) to efficiently compute high-quality solutions and prove its convergence properties.
Computational experiments using three publicly available rating datasets demonstrate that our DRO model generates more diverse recommendations than existing models while maintaining the same level of accuracy. 
Additionally, our solution method computes these recommendations for each user in just a few seconds, proving its practical effectiveness. 
This study establishes a DRO framework that has the potential to enhance the recommendation quality of various collaborative filtering methods.}

\keywords{Recommender System, Robust Optimization, Cardinality Constraint, Alternating Direction Method, Diversity}



\maketitle

\section{Introduction}\label{sec:intro}

\subsection{Background}

The main purpose of recommender systems is to support users' decision-making processes in online services by providing a personalized list of items that match individual preferences based on behavioral history and attribute information~\cite{Ag16}.
In today's information-overload environment fueled by advances in information and communication technologies, recommender systems play an essential role for both users and providers of online services.
Benefits to users include efficient information search, improved service satisfaction, discovery of new items, and expansion of interests.
Meanwhile, benefits to providers include increased service usage, increased sales and revenue, and a foundation for building an advantage over competitors.
In fact, recommender systems have been effectively implemented in a variety of online services~\cite{LuWu15}, such as Amazon~\cite{LiSm03}, Netflix~\cite{BeLa07}, Google News~\cite{DaDa07}, and YouTube~\cite{DaLi10}.

Although various algorithms have been developed for recommender systems~\cite{Ag16}, one of the most successful techniques is collaborative filtering, which can be categorized into neighborhood-based and model-based methods. 
Neighborhood-based methods exploit the similarity between users to infer the preferences of target users~\citep{HeKo99}, or the similarity between items to suggest related items~\citep{SaKa01}. 
Model-based methods use advanced machine learning algorithms (e.g., matrix factorization~\citep{KoBe09} and deep learning~\citep{ZhYa19}) to predict users' ratings for items with high accuracy.

In the practical use of recommender systems, recommendations are often presented to users in the form of a list of items (e.g., top-$N$ recommendations) rather than a single item~\citep{Ad13}.
It has been demonstrated that the most accurate recommendations are not necessarily the most helpful to users~\citep{McRi06}, so it is crucial to consider not only the accuracy of recommendations but also the diversity of recommended items and their novelty to users~\citep{VaCa11}.
In fact, user satisfaction with recommendation lists has been shown to be positively correlated with the diversity of those recommendations~\citep{EkHa14}. 
In addition, some well-known recommender algorithms tend to recommend already popular items, resulting in reduced sales diversity~\citep{FlHo09}. 

\subsection{Related Work}

A number of prior studies have considered the definition and evaluation of diversity, its impact on recommendation quality, and the development of diversification algorithms~\citep{CaHu22,KaBr16,KuPo17}.
\citet{ZiMc05} dealt with decreasing the intra-list similarity of topics in recommendation lists. 
\citet{LaHa10} investigated temporal diversity of recommendations to avoid repeatedly recommending the same items to a particular user. 
\citet{AdKw12} considered a general-purpose item ranking algorithm for diverse recommendations based on the popularity of each item.
\citet{KaIw13} developed a probabilistic diversification model for increasing the probability of recommending at least one relevant item to each user. 
\citet{VaBa14} provided a binomial framework for defining the genre diversity of recommendations. 

We here focus on the application of financial portfolio theory~\citep{ElGr09} as a method for achieving both accuracy and diversity in recommendations. 
This approach was first introduced in information retrieval~\citep{WaZh09} and then applied to collaborative filtering~\citep{Wa09}. 
\citet{Wa09} demonstrated that a ranking strategy based on mean--variance portfolio analysis can improve the recommendation accuracy of collaborative filtering algorithms. 
\citet{ShZh12} devised a mean--variance ranking strategy using a latent factor model of user ratings.
\citet{HuZh11} developed heuristic algorithms for several optimization models with cardinality constraints to select diverse lists of items.  
\citet{XiPe20} used the portfolio optimization model to recommend relevant services based on service risk facets. 

These previous studies have demonstrated that applying mean-variance portfolio optimization to recommender systems can improve the quality (e.g., accuracy, diversity, and novelty) of recommendations.
However, estimation of the mean and covariance of ratings is required for mean–variance portfolio optimization, and these statistics are subject to inevitable estimation errors.
It has been pointed out that such estimation errors degrade the performance of mean--variance portfolio optimization models~\cite{Br93,Mi89}.
\citet{YaYa26} developed shrinkage estimation methods for accurately estimating the rating covariance matrix. 
Furthermore, to cope with uncertainty in both the mean and covariance of ratings, \citet{YaIk24} proposed a robust portfolio optimization model based on the cardinality-based uncertainty sets~\cite{BeSi04}. 
However, this robust optimization model has theoretical drawbacks, such as not guaranteeing the positive semidefiniteness of covariance matrices and ignoring the relationship between the uncertainty of the mean and covariance.

To overcome the challenges faced by these existing methods, we exploit the framework of distributionally robust optimization (DRO)~\cite{KuSh25,RaMe22}, where the optimal solution is derived in terms of the worst-case probability distribution contained in the ambiguity set.
This framework combines the advantages of robust optimization, which focuses on worst-case scenarios, and stochastic optimization, which considers probability distributions.
Recently, \citet{KoTa23} developed an algorithm for solving the cardinality-constrained distributionally robust portfolio optimization model based on moment-based ambiguity sets~\cite{DeYe10}.
To the best of our knowledge, however, no prior studies have considered a DRO model for selecting a high-quality list of recommended items.

\subsection{Contribution}

The main goal of this paper is to establish a computational DRO framework for recommendation selection. 
Specifically, we first formulate a DRO model with a cardinality constraint to select a specified number of recommendations for each user.
Next, we design a penalty alternating direction method (PADM)~\cite{geissler2017penalty,moreira2022alternating} for efficiently computing high-quality solutions to our DRO model. 
Furthermore, we prove the convergence properties of our PADM algorithm based on two revised problem formulations. 

To evaluate the effectiveness of our recommendation framework, we conducted computational experiments using three publicly available rating datasets. 
The experimental results demonstrate that our model generates more diverse recommendations than existing recommendation selection models while maintaining the same level of recommendation accuracy.
We also experimentally verify that our solution method computes the aforementioned high-quality recommendations for each user in just a few seconds.

\subsection{Notation}
The zero and all-ones vectors of appropriate sizes are denoted by $\bm{0}$ and $\bm{1}$, respectively. 
The zero and identity matrices of appropriate sizes are denoted by $\bm{O}$ and $\bm{I}$, respectively. 
The $L_p$-norm of a vector is denoted by $\|\cdot\|_p$, and the Frobenius norm of a matrix is denoted by $\|\cdot\|_{\rm F}$. 
The trace (i.e., sum of the diagonal entries) of a matrix is denoted by $\mathrm{tr}(\,\cdot\,)$. 
For two symmetric matrices $\bm{X}$ and $\bm{Y}$, we write $\bm{X} \succeq \bm{Y}$ if $\bm{X} - \bm{Y}$ is a positive semidefinite matrix. 
The standard inner product of matrices $\bm{A} \coloneqq (a_{ij})_{(i,j) \in I \times J}$ and $\bm{B} \coloneqq (b_{ij})_{(i,j) \in I \times J}$ is defined as $\bm{A} \bullet \bm{B} \coloneqq \sum_{(i,j) \in I \times J} a_{ij}b_{ij}$.

\section{Problem Formulation}\label{sec:dis_rob}

In this section, we first pose a nominal optimization model for recommendation selection, and next introduce moment-based ambiguity sets~\cite{DeYe10} for the probability distribution of ratings. 
We then present, by following \citet{KoTa23}, our DRO model with a cardinality constraint to select a specified number of recommendations for each user. 

\subsection{Nominal Optimization Model}

Let $r_{ui}$ be the rating value that user $u \in U$ gives to item $i \in I$ (e.g., through a 5-star rating system), where $U$ and $I$ are the sets of users and items, respectively. 
We define $R_{\rm O}$ and $R_{\rm M}$ as the sets of user-item pairs with observed and missing ratings, respectively: 
\[
\mbox{$r_{ui}$~is~}
\begin{cases}
~\mbox{observed} & \mbox{if~$(u,i) \in R_{\rm O}$}, \\
~\mbox{missing} & \mbox{if~$(u,i) \in R_{\rm M}$} \\
\end{cases} \quad ((u,i) \in U \times I).
\]
A key task in recommender systems is to predict the missing rating values for user--item pairs $(u,i) \in R_{\rm M}$. 
For this purpose, various rating prediction algorithms, such as collaborative filtering, have been proposed~\cite{Ag16}.

For each user $u \in U$, let $I(u) \subseteq \{i \in I \mid (u,i) \in R_{\rm M} \}$ be the set of candidate items to be recommended. 
For each user $u \in U$, we introduce $\bm{x} \coloneqq (x_{i})_{i \in I(u)} \in \{0,1\}^{\lvert I(u) \rvert}$ as a vector of binary decision variables for selecting recommendations; that is, $x_{i} = 1$ if item $i \in I(u)$ is recommended, and $x_{i} = 0$ otherwise. 
The problem of selecting $N$ recommendations for each user $u \in U$ can be posed as the following nominal optimization model: 
\begin{subequations}\label{eq:robust_portfolio}
\begin{align} 
    \underset{\bm{x}}{\text{maximize}} \quad &  \sum_{i \in I(u)} \hat{r}_{ui} x_{i} \label{eq:robust_portfolio_obj} \\ 
    \text {subject to} \quad & \sum_{i \in I(u)} x_{i} = N, \label{eq:robust_portfolio_N} \\
    & \bm{x} \in \{0,1\}^{|I(u)|}, \label{eq:robust_portfolio_x}
\end{align}
\end{subequations}
where $\hat{r}_{ui}$ is the predicted rating value for user--item pair $(u,i) \in R_{\rm M}$, and $N \in \mathbb{Z}_{+}~(N \le |I(u)|)$ is a parameter representing the number of recommended items. 
The objective in Eq.~\eqref{eq:robust_portfolio_obj} is to maximize the sum of the predicted ratings for the  recommended items. 
Eq.~\eqref{eq:robust_portfolio_N} is the cardinality constraint for specifying the number of recommended items. 
Eq.~\eqref{eq:robust_portfolio_x} is the binary constraint for selecting recommendations. 

Note that problem~\eqref{eq:robust_portfolio} is exactly consistent with the common top-$N$ recommendation~\citep{CrKo10}, which selects the $N$ items with the highest predicted ratings. 

\subsection{Moment-based Ambiguity Set}

We will now treat missing ratings as random variables by following financial portfolio theory~\citep{ElGr09}, which treats stock returns as random variables. 
Let $(\Omega, \mathcal{F})$ be the measurable space associated with the probability measures of missing ratings.
For each user $u \in U$, we define $\tilde{\bm{r}}_{u} \coloneqq (\tilde{r}_{ui})_{i \in I(u)} : \Omega \to \mathbb{R}^{|I(u)|}$ as an $\mathcal{F}$-measurable function (i.e., $|I(u)|$-dimensional random vector), consisting of the missing ratings for each item $i \in I(u)$. 

We define $\hat{\bm{\mu}}_{u} \coloneqq (\hat{\mu}_{ui})_{i \in I(u)} \in \mathbb{R}^{|I(u)|}$ as a vector of expected ratings, which can be specified using rating prediction algorithms (i.e., $\hat{\mu}_{ui} \coloneqq \hat{r}_{ui}$ for $(u,i) \in R_{\rm M}$). 
In contrast, it is virtually impossible to estimate the covariance of ratings between items for each user $u \in U$, mainly because the number of items rated by each user (i.e., $|I(u)|$) is usually very small. 
It is therefore reasonable to instead estimate a sample covariance matrix from all users' ratings. 

Let $U(i,j)$ be the set of users who rated both items $i,j \in I$.
A sample estimate of the covariance matrix for each user $u \in U$ is then given by 
\begin{equation}\label{eq:smpmat}
\hat{\bm{\Sigma}}_u \coloneqq (\hat{\sigma}_{ij})_{(i,j) \in I(u) \times I(u)} \in \mathbb{R}^{\lvert I(u) \rvert \times \lvert I(u) \rvert} \quad (u \in U),
\end{equation}
where 
\begin{align}
& \hat{\sigma}_{ij} := 
\begin{cases}
~ \displaystyle \frac{1}{\lvert U(i,j) \rvert} \sum_{u \in U(i,j)} \left(r_{ui} - \mu_i^{(j)}\right) \left(r_{uj} - \mu_j^{(i)}\right) & \mbox{if~$U(i,j) \neq \emptyset$,} \\
~0 & \mathrm{otherwise}
\end{cases} \quad ((i,j) \in I \times I), \notag \\ 
& \mu_i^{(j)} := \frac{1}{\lvert U(i,j) \rvert} \sum_{u \in U(i,j)} r_{ui} \quad ((i,j) \in I \times I). \notag 
\end{align}

Let $\mathcal{M}$ be the set of all probability measures in the measurable space $(\Omega, \mathcal{F})$, and $\mathbb{E}_P[\,\cdot\,]$ be the expectation under the probability measure $P \in \mathcal{M}$.
If the support of probability distribution of $\tilde{\bm{r}}_{u}$ is $\mathbb{R}^{|I(u)|}$, the associated moment-based ambiguity set~\cite{DeYe10} is given by
\begin{align}
& \mathcal{A}_{u}(\hat{\bm{\mu}}_u, \hat{\bm{\Sigma}}_u, \kappa_1, \kappa_2) \notag \\
\coloneqq~ & \left\{ P \in \mathcal{M} \,\middle|\, 
\begin{array}{l}
( \mathbb{E}_P[\tilde{\bm{r}}_u] - \hat{\bm{\mu}}_u )^\top \hat{\bm{\Sigma}}_u^{-1} ( \mathbb{E}_P[\tilde{\bm{r}}_u] -  \hat{\bm{\mu}}_u ) \leq \kappa_1 \\
\mathbb{E}_P [ (\tilde{\bm{r}}_u - \hat{\bm{\mu}}_u)(\tilde{\bm{r}}_u - \hat{\bm{\mu}}_u)^\top ] \preceq \kappa_2 \hat{\bm{\Sigma}}_u 
\end{array}
\right\} \quad (u \in U), \label{eq:set_unc}
\end{align}
where $\kappa_1 \in \mathbb{R}_{+}$ and $\kappa_2 \in \mathbb{R}_{+}$ are parameters that represent the ambiguity of $\hat{\bm \mu}_u$ and $\hat{\bm \Sigma}_u$, respectively.  
The first condition in Eq.~\eqref{eq:set_unc} ensures that the expectation of $\tilde{\bm{r}}_u$ lies inside an ellipsoid of size $\kappa_1$ centered at $\hat{\bm \mu}_u$.
The second condition in Eq.~\eqref{eq:set_unc} states that the covariance matrix of $\tilde{\bm{r}}_u$ is upper bounded by $\kappa_2 \hat{\bm \Sigma}_u$ via the matrix inequality. 

\subsection{Distributionally Robust Optimization Model}

We are now in a position to formulate our DRO model for selecting $N$ recommendations for each user $u \in U$ as follows:
\begin{subequations}\label{eq:dist_robust_portfolio}
\begin{align}
    \underset{\bm{x}}{\text{maximize}} \quad & \min \left\{ \mathbb{E}_{P} \Bigl[ \sum_{i \in I(u)} \tilde{r}_{ui} x_{i} \Bigr] \,\middle|\, P \in \mathcal{A}_{u}(\hat{\bm{\mu}}_u, \hat{\bm{\Sigma}}_u, \kappa_1, \kappa_2) \right\} \label{eq:dist_robust_portfolio_obj} \\
    \text {subject to} \quad & \sum_{i \in I(u)} x_{i} = N, \label{eq:dist_robust_portfolio_N} \\
    & \bm{x} \in \{0,1\}^{|I(u)|}. \label{eq:dist_robust_portfolio_x}
\end{align}
\end{subequations}
The objective in Eq.~\eqref{eq:dist_robust_portfolio_obj} is to maximize the expected rating sum under the worst-case probability measure, which is found by the inner minimization problem in Eq.~\eqref{eq:dist_robust_portfolio_obj} from the ambiguity set in Eq.~\eqref{eq:set_unc}. 

It is known that the maximin problem~\eqref{eq:dist_robust_portfolio} can be equivalently reformulated as a single-level maximization problem~\cite{DeYe10,KoTa23}. 
Specifically, deriving the dual maximization problem from the inner minimization problem in Eq.~\eqref{eq:dist_robust_portfolio_obj} gives the following mixed-integer semidefinite optimization (MISDO) problem: 
\begin{subequations}\label{eq:misdo}
\begin{align}
\underset{\substack{\bm{p}, \bm{P}, \bm{q}, \bm{Q}, r, s, \bm{x}}}{\text{maximize}} \quad & \Bigl(\hat{\bm{\mu}}_u \hat{\bm{\mu}}_u ^{\top} -\kappa_2 \hat{\bm{\Sigma}}_u \Bigr) \bullet \bm{Q}-\hat{\bm{\Sigma}}_u \bullet \bm{P}+2 \hat{\bm{\mu}}_u^{\top} \bm{p}-\kappa_1 r -s\label{eq:misdo_obj}\\
\text {subject to \hspace{1pt}} \quad & \bm{p}=-\bm{q} / 2-\bm{Q} \hat{\bm{\mu}}_u ,\label{eq:misdo_con1}\\
& \begin{pmatrix}
\bm{P} & \bm{p} \\
\bm{p}^{\top} & r
\end{pmatrix} \succeq \bm{O}, \quad 
\begin{pmatrix}
\bm{Q} & \bm{q} / 2+\bm{x} / 2 \\
(\bm{q} / 2+\bm{x} / 2)^{\top} & s
\end{pmatrix} \succeq \bm{O}, \label{eq:misdo_con2} \\
& \sum_{i \in I(u)} x_i=N, \label{eq:misdo_con3}\\
& \bm{x} \in\{0,1\}^{\lvert I(u) \rvert}, \label{eq:misdo_con4}
\end{align}
\end{subequations}
where $\bm{P}, \bm{Q} \in \mathbb{R}^{|I(u)| \times |I(u)|},~ \bm{p}$, $\bm{q} \in \mathbb{R}^{\lvert I(u) \rvert}$, and $r, s \in \mathbb{R}_{+}$ are dual decision variables of the inner minimization problem in Eq.~\eqref{eq:dist_robust_portfolio_obj}. 

\section{Penalty Alternating Direction Method}\label{sec:pen_alt}

In this section, we first formulate a penalized version of the MISDO problem~\eqref{eq:misdo}, and then present the PADM algorithm~\cite{geissler2017penalty,moreira2022alternating}, which efficiently finds high-quality solutions to the original MISDO problem~\eqref{eq:misdo} by repeatedly solving the penalized problem. 
We also discuss the convergence properties of the PADM algorithm. 

\subsection{Penalized Formulations}

We will now rewrite the MISDO problem~\eqref{eq:misdo} for the PADM algorithm design. 
Let us introduce a binary decision variable $\bm{z} \coloneqq (z_{i})_{i \in I(u)} \in \{0,1\}^{\lvert I(u) \rvert}$ as a copy of $\bm{x}$ (i.e., $\bm{x} = \bm{z}$). 
Next, we relax the binary constraint (Eq.~\eqref{eq:misdo_con4}) on $\bm{x}$ and move the cardinality constraint (Eq.~\eqref{eq:misdo_con3}) from $\bm{x}$ to $\bm{z}$.
As a result, we can equivalently rewrite the problem~\eqref{eq:misdo} as the following MISDO problem: 
\begin{subequations}\label{eq:misdo2}
\begin{align}
\underset{\substack{\bm{p}, \bm{P}, \bm{q}, \bm{Q}, r, s, \bm{x}, \bm{z}}}{\text{maximize}} \quad & \Bigl(\hat{\bm{\mu}}_u \hat{\bm{\mu}}_u ^{\top} -\kappa_2 \hat{\bm{\Sigma}}_u \Bigr) \bullet \bm{Q}-\hat{\bm{\Sigma}}_u \bullet \bm{P}+2 \hat{\bm{\mu}}_u^{\top} \bm{p}-\kappa_1 r - s \label{eq:misdo2_obj}\\
\text {subject to \hspace{1pt}} \quad 
& \bm{x} = \bm{z} \label{eq:misdo2_con1} \\
& \bm{p}=-\bm{q} / 2-\bm{Q} \hat{\bm{\mu}}_u ,\label{eq:misdo2_con2}\\
& \begin{pmatrix}
\bm{P} & \bm{p} \\
\bm{p}^{\top} & r
\end{pmatrix} \succeq \bm{O}, \quad 
\begin{pmatrix}
\bm{Q} & \bm{q} / 2+\bm{x} / 2 \\
(\bm{q} / 2+\bm{x} / 2)^{\top} & s
\end{pmatrix} \succeq \bm{O}, \label{eq:misdo2_con3} \\
& \bm{0} \le \bm{x} \le \bm{1}, \label{eq:misdo2_con4} \\
& \sum_{i \in I(u)} z_i=N, \quad \bm{z} \in\{0,1\}^{\lvert I(u) \rvert}. 
\label{eq:misdo2_con5}
\end{align}
\end{subequations}

\begin{lemma}\label{lemma:nonempty} \rm
The feasible set for problem~\eqref{eq:misdo2} is nonempty. 
\end{lemma}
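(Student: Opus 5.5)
The plan is to exhibit one explicit feasible point of problem~\eqref{eq:misdo2}. The only constraints that need care are the two semidefinite conditions in Eq.~\eqref{eq:misdo2_con3}, and they become trivial once the off-diagonal blocks vanish. So I will choose the variables to make those blocks zero.

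First fix the binary part. Since $N \le |I(u)|$, choose any index set $S \subseteq I(u)$ with $|S| = N$, and let $\bm{z}$ be its indicator vector. Then $\sum_{i\in I(u)} z_i = N$ and $\bm{z} \in \{0,1\}^{|I(u)|}$, so Eq.~\eqref{eq:misdo2_con5} holds. Set $\bm{x} \coloneqq \bm{z}$. This gives Eq.~\eqref{eq:misdo2_con1}, and since $\bm{x}$ is binary, also $\bm{0} \le \bm{x} \le \bm{1}$ in Eq.~\eqref{eq:misdo2_con4}.

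Next choose the continuous variables so that every off-diagonal block vanishes. Take $\bm{q} \coloneqq -\bm{x}$, so that $\bm{q}/2 + \bm{x}/2 = \bm{0}$. Take $\bm{Q} \coloneqq \bm{O}$ and $s \coloneqq 0$. Then Eq.~\eqref{eq:misdo2_con2} forces
\[
\bm{p} = -\bm{q}/2 - \bm{Q}\hat{\bm{\mu}}_u = \bm{x}/2 .
\]
For the first matrix in Eq.~\eqref{eq:misdo2_con3}, take $\bm{P} \coloneqq \bm{p}\bm{p}^\top$ and $r \coloneqq 1$. Its Schur complement with respect to $r$ is $\bm{P} - \bm{p}\bm{p}^\top/r = \bm{O} \succeq \bm{O}$, and $r > 0$, so the matrix is positive semidefinite. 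Equivalently, it is the rank-one matrix $(\bm{p}^\top, 1)^\top(\bm{p}^\top, 1)$. The second matrix is the zero matrix, which is trivially positive semidefinite. Finally, $r = 1 \ge 0$ and $s = 0 \ge 0$, which respects the requirement $r, s \in \mathbb{R}_+$.

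All constraints are therefore satisfied, and the feasible set is nonempty. There is no real obstacle here. The only step needing a moment's thought is coupling the linear constraint Eq.~\eqref{eq:misdo2_con2} with the two semidefinite blocks. Cancelling $\bm{q}$ against $\bm{x}$ makes the second block zero, and the rank-one choice of $\bm{P}$ handles the first block.
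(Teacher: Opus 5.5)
Your proof is correct and takes the same approach as the paper: exhibit one explicit feasible point and check the two semidefinite blocks via the Schur complement. The difference is only in the witness. The paper takes $\bm{P}=\bm{Q}=\bm{I}$, $\bm{q}=\bm{0}$, $\bm{p}=-\hat{\bm{\mu}}_u$, $r=\hat{\bm{\mu}}_u^{\top}\hat{\bm{\mu}}_u+1$, $s=N/4+1$, which makes both blocks positive definite, whereas your choice $\bm{q}=-\bm{x}$, $\bm{Q}=\bm{O}$, $\bm{P}=\bm{p}\bm{p}^{\top}$ puts both blocks on the boundary of the cone. That is harmless here, and your point even has smaller traces ($\mathrm{tr}(\bm{P})=N/4$, $\mathrm{tr}(\bm{Q})=0$, versus $|I(u)|$ each), so it also satisfies the trace bounds of problem~\eqref{eq:misdo_tr} for smaller $\tau_P,\tau_Q$.
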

\begin{proof}
For instance, a feasible solution can be constructed for the problem~\eqref{eq:misdo2} as follows:
\begin{equation}\label{eq:fsblsol}
\bm{p} = -\hat{\bm{\mu}}_u,~
\bm{q} = \bm{0},~
\bm{P} = \bm{Q} = \bm{I},~
r = \hat{\bm{\mu}}_u^{\top} \hat{\bm{\mu}}_u + 1,~
s = \frac{N}{4} + 1,~
\bm{x} = \bm{z} = \begin{pmatrix}
\bm{1} \\
\bm{0} 
\end{pmatrix},
\end{equation}
where $\bm{1} \in \mathbb{R}^N$ and $\bm{0} \in \mathbb{R}^{|I(u)|-N}$. 
Satisfaction of the positive semidefinite constraints in Eq.~\eqref{eq:misdo2_con3} can be verified from a property of the Schur complement~\cite{BoVa04}. 
\end{proof}

We now define a collection of the continuous decision variables as follows:
\[
\bm{X} \coloneqq (\bm{p}, \bm{P}, \bm{q}, \bm{Q}, r, s, \bm{x}). 
\]
We also denote the objective function in Eq.~\eqref{eq:misdo2_obj} as
\begin{align}
f(\bm{X},\bm{z}) \coloneqq \Bigl(\hat{\bm{\mu}}_u \hat{\bm{\mu}}_u ^{\top} -\kappa_2 \hat{\bm{\Sigma}}_u \Bigr) \bullet \bm{Q}-\hat{\bm{\Sigma}}_u \bullet \bm{P}+2 \hat{\bm{\mu}}_u^{\top} \bm{p} -\kappa_1 r - s, \label{eq:misdo_low_obj}
\end{align}
and the associated feasible sets as
\begin{align}
& \mathcal{X} \coloneqq \{\bm{X} = (\bm{p}, \bm{P}, \bm{q}, \bm{Q}, r, s, \bm{x}) \mid \mbox{Eqs.~\eqref{eq:misdo2_con2}--\eqref{eq:misdo2_con4}}\}, \label{eq:set_X} \\
& \mathcal{Z} \coloneqq \{\bm{z} \in\{0,1\}^{\lvert I(u) \rvert} \mid \sum_{i \in I(u)} z_i=N \}. \label{eq:set_z}
\end{align}
Then, the MISDO problem~\eqref{eq:misdo2} can be rewritten in the following simple form:
\begin{subequations}\label{eq:misdo3}
\begin{align}
\underset{\substack{\bm{X}, \bm{z}}}{\text{maximize}} \quad & f(\bm{X},\bm{z}) \label{eq:misdo3_obj}\\
\text {subject to} \quad 
& \bm{x} = \bm{z}, \label{eq:misdo3_con1} \\
& (\bm{X},\bm{z}) \in \mathcal{X} \times \mathcal{Z}. \label{eq:misdo3_con2}
\end{align}
\end{subequations}

We now relax the copy constraint in Eq.~\eqref{eq:misdo3_con1} and instead add the $L_1$-norm penalty term as 
\begin{subequations}\label{eq:misdo3_pen}
\begin{align}
\underset{\substack{\bm{X}, \bm{z}}}{\text{maximize}} \quad & f(\bm{X},\bm{z} \mid \gamma) \coloneqq f(\bm{X},\bm{z}) - \gamma \|\bm{x} -\bm{z}\|_1 \label{eq:misdo3_pen_obj}\\
\text {subject to} \quad 
& (\bm{X},\bm{z}) \in \mathcal{X} \times \mathcal{Z}, \label{eq:misdo3_pen_con1}
\end{align}
\end{subequations}
where $\gamma \in \mathbb{R}_{+}$ is a penalty weight parameter. 
We also define the partial optimality for this penalized problem~\eqref{eq:misdo3_pen}.  
\begin{definition}\label{def}\rm
Let $(\bm{X}^\star,\bm{z}^\star) \in \mathcal{X} \times \mathcal{Z}$ be a feasible solution to problem~\eqref{eq:misdo3_pen}. 
Then, $(\bm{X}^\star,\bm{z}^\star)$ is called a \emph{partial maximizer} to the problem~\eqref{eq:misdo3_pen} when it satisfies
\begin{align*}
& f(\bm{X}^\star,\bm{z}^\star \mid \gamma) \ge f(\bm{X},\bm{z}^\star \mid \gamma) \quad \mbox{for all $\bm{X} \in \mathcal{X}$}, \\
& f(\bm{X}^\star,\bm{z}^\star \mid \gamma) \ge f(\bm{X}^\star,\bm{z} \mid \gamma) \quad \mbox{for all $\bm{z} \in \mathcal{Z}$}.
\end{align*}
\end{definition}

\subsection{Algorithm}

Algorithm~\ref{alg:PADM} describes our PADM algorithm, which consists of a double-loop structure for efficiently finding high-quality solutions to the MISDO problem~\eqref{eq:misdo3}.
The inner loop solves the penalized problem~\eqref{eq:misdo3_pen} with a given penalty weight $\gamma$, by alternating between updating $\bm{X}$ and $\bm{z}$ (lines 4 and 5) until the update is smaller than an inner-loop threshold parameter $\varepsilon_{\text{in}} \in \mathbb{R}_{+}$ (line 6). 
The outer loop checks whether the $L_1$-distance between $\bm{x}$ and $\bm{z}$ is smaller than an outer-loop threshold parameter $\varepsilon_{\text{out}} \in \mathbb{R}_{+}$ (line 11). 
If this condition is met, we terminate the algorithm with the solution obtained so far (line 12); otherwise, we multiply the penalty weight $\gamma$ by a scaling parameter $\theta > 1$ (line 14) and return to the inner loop to solve the penalized problem~\eqref{eq:misdo3_pen} with the increased penalty weight $\gamma$. 

\begin{algorithm}
\caption{Penalty alternating direction method for solving problem~\eqref{eq:misdo3}}
\label{alg:PADM}
\textbf{Input:} $\varepsilon_{\text{in}} \in \mathbb{R}_{+}$, $\varepsilon_{\text{out}}  \in \mathbb{R}_{+}$, $\theta > 1$. \\
\textbf{Initialize:} $\bm{X}^{(0)} \in \mathcal{X}$, $\bm{z}^{(0)} \in \mathcal{Z}$, $\gamma^{(1)} \in \mathbb{R}_{+}$. 
\begin{algorithmic}[1]
\For{$t = 1, 2, \dots$} \Comment{outer loop}
    \State Set $\bm{X}^{(t,0)} \coloneqq \bm{X}^{(t-1)}$ and $\bm{z}^{(t,0)} \coloneqq \bm{z}^{(t-1)}$.
    \For{$k = 1, 2, \dots$} \Comment{inner loop}
        \State Find $\bm{X}^{(t,k)}$ by solving problem~\eqref{eq:misdo_x} with $(\gamma,\bar{\bm{z}}) = (\gamma^{(t)},\bm{z}^{(t,k-1)})$. 
        \State Compute $\bm{z}^{(t,k)}$ through Eq.~\eqref{eq:sort} from $\bar{\bm{x}} = \bm{x}^{(t,k)}$.
        \If{$\|(\bm{X}^{(t,k)}, \bm{z}^{(t,k)}) - (\bm{X}^{(t,k-1)}, \bm{z}^{(t,k-1)})\|_{\infty} \le \varepsilon_{\text{in}}$} 
            \State Set $(\bm{X}^{(t)}, \bm{z}^{(t)}) \coloneqq (\bm{X}^{(t,k)}, \bm{z}^{(t,k)})$.
            \State \textbf{break} \Comment{go to line 11}
        \EndIf
    \EndFor
    \If{$\|\bm{x}^{(t)} - \bm{z}^{(t)}\|_1 \leq \varepsilon_{\text{out}}$} \Comment{termination condition}
        \State Terminate the algorithm with $(\bm{X}^{(t)},\bm{z}^{(t)}) \in \mathcal{X} \times \mathcal{Z}$.
    \EndIf
    \State Update $\gamma^{(t+1)} \coloneqq \theta \gamma^{(t)}$. \Comment{penalty weight update}
\EndFor
\end{algorithmic}
\textbf{Output:} $(\bm{X}^{(t)},\bm{z}^{(t)}) \in \mathcal{X} \times \mathcal{Z}$.
\end{algorithm} 

The inner loop first updates the continuous decision variable $\bm{X} \in \mathcal{X}$ while keeping the binary decision variable $\bm{z} \in \mathcal{Z}$ fixed.
Specifically, we solve the following subproblem with $\bm{z} = \bar{\bm{z}}$:
\begin{subequations}\label{eq:misdo_x}
\begin{align}
\underset{\substack{\bm{X}}}{\text{maximize}} \quad & f(\bm{X},\bar{\bm{z}} \mid \gamma) \label{eq:misdo2_pen_obj}\\
\text {subject to} \quad 
& \bm{X} \in \mathcal{X}. \label{eq:misdo_x_con1}
\end{align}
\end{subequations}
This semidefinite optimization (SDO) problem, which does not contain integer decision variables, can be solved exactly in polynomial time by using optimization solvers based on interior-point methods~\cite{BoVa04}. 

The inner loop next updates the binary decision variable $\bm{z} \in \mathcal{Z}$ while keeping the continuous decision variable $\bm{X} \in \mathcal{X}$ fixed.
After omitting the parts that are irrelevant to this optimization, we solve the following subproblem with $\bm{x} = \bar{\bm{x}}$:
\begin{subequations}\label{eq:misdo_z}
\begin{align}
\underset{\substack{\bm{z}}}{\text{minimize}} \quad & \gamma \|\bar{\bm{x}} - \bm{z} \|_1 \label{eq:misdo_z_obj}\\
\text {subject to} \quad  
& \sum_{i \in I(u)} z_i=N, \quad \bm{z} \in\{0,1\}^{\lvert I(u) \rvert}. 
\label{eq:misdo_z_con1}
\end{align}
\end{subequations}
This problem can be solved easily by setting $z_i = 1$ for $i \in I(u)$ corresponding to the largest $N$ entries of $\bar{\bm{x}} \coloneqq (\bar{x}_i)_{i \in I(u)}$.
Specifically, we define the permutation $\sigma:~I(u) \to I(u)$ such that 
\[
1 \ge \bar{x}_{\sigma(1)} \ge \bar{x}_{\sigma(2)} \ge \cdots \ge \bar{x}_{\sigma(|I(u)|)} \ge 0.
\]
Then, we can find an optimal solution of $\bm{z} \in\{0,1\}^{\lvert I(u) \rvert}$ to problem~\eqref{eq:misdo_z} as follows:
\begin{equation}\label{eq:sort}
z_i \coloneqq
\begin{cases}
~1 & \mbox{if}~i \in \{\sigma(j) \mid j \in \{1,2,\ldots,N\}\}, \\
~0 & \mbox{otherwise}
\end{cases} \quad (i \in I(u)).
\end{equation}

\subsection{Convergence Properties}

\citet{geissler2017penalty} gave a general convergence theorem for the PADM algorithm under the assumption that the feasible sets $\mathcal{X}$ and $\mathcal{Z}$ are nonempty and compact. 
These feasible sets are guaranteed to be nonempty by Lemma~\ref{lemma:nonempty}, and $\mathcal{Z}$ is clearly compact by the definition of Eq.~\eqref{eq:set_z}, but $\mathcal{X}$ defined by Eq.~\eqref{eq:set_X} is not compact.
To overcome this challenge, we propose two revised formulations and discuss the convergence properties for those problems based on the existing result~\cite{geissler2017penalty}.

Before that, we provide the following lemma on the positive semidefinite constraint in Eq.~\eqref{eq:misdo2_con3}. 
\begin{lemma}\label{lemma:p_2^2} \rm
When $\begin{pmatrix}
        \bm{P} & \bm{p} \\
    \bm{    p}^{\top} & r
    \end{pmatrix} \succeq \bm{O}$, the following inequalities hold:
    \[
    \|\bm{p}\|_2^2 \le r \cdot \mathrm{tr}(\bm{P}), \quad 
    \|\bm{p}\|_2^2 \le r \|\bm{P}\|_{\rm F}.
    \]
\end{lemma}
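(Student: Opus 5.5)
Both inequalities come from testing the positive semidefinite block matrix against suitable vectors and then comparing the trace and Frobenius norm of $\bm{P}$ with its largest eigenvalue. Write $\bm{M} \coloneqq \begin{pmatrix} \bm{P} & \bm{p} \\ \bm{p}^{\top} & r \end{pmatrix} \succeq \bm{O}$.

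\textbf{Basic facts.} Principal submatrices of a positive semidefinite matrix are positive semidefinite, so $\bm{P} \succeq \bm{O}$ and $r \ge 0$. Moreover, for any vector $\bm{v}$ and scalar $t$,
\[
\begin{pmatrix} \bm{v} \\ t \end{pmatrix}^{\top} \bm{M} \begin{pmatrix} \bm{v} \\ t \end{pmatrix}
= \bm{v}^{\top}\bm{P}\bm{v} + 2t\,\bm{p}^{\top}\bm{v} + r t^2 \ge 0 .
\]

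\textbf{Key intermediate bound.} I will show $\|\bm{p}\|_2^2 \le r\,\lambda_{\max}(\bm{P})$.
\begin{itemize}
\item \emph{Case $r=0$.} Take $\bm{v}=\bm{p}$ with arbitrary $t$. The expression becomes $\bm{p}^{\top}\bm{P}\bm{p} + 2t\|\bm{p}\|_2^2 \ge 0$ for all $t \in \mathbb{R}$, which is affine in $t$. This forces $\bm{p}=\bm{0}$, so both inequalities hold trivially.
\item \emph{Case $r>0$.} Take $\bm{v}=\bm{p}$ and $t=-\|\bm{p}\|_2^2/r$. This gives
\[
\bm{p}^{\top}\bm{P}\bm{p} - \|\bm{p}\|_2^4/r \ge 0 .
\]
Since $\bm{p}^{\top}\bm{P}\bm{p} \le \lambda_{\max}(\bm{P})\|\bm{p}\|_2^2$, dividing by $\|\bm{p}\|_2^2$ (when $\bm{p}\neq\bm{0}$) yields $\|\bm{p}\|_2^2 \le r\lambda_{\max}(\bm{P})$.
\item \emph{Alternative.} The same bound follows from the Schur complement~\cite{BoVa04}: $r>0$ implies $\bm{P} - \bm{p}\bm{p}^{\top}/r \succeq \bm{O}$. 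Evaluating this at $\bm{p}$ gives the same inequality.
\end{itemize}

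\textbf{Conclusion.} Since $\bm{P} \succeq \bm{O}$, its eigenvalues $\lambda_i$ are nonnegative. Hence
\[
\lambda_{\max}(\bm{P}) \le \sum_i \lambda_i = \mathrm{tr}(\bm{P}), \qquad
\lambda_{\max}(\bm{P}) \le \Bigl(\sum_i \lambda_i^2\Bigr)^{1/2} = \|\bm{P}\|_{\rm F},
\]
where the Frobenius identity uses the symmetry of $\bm{P}$. Multiplying by $r \ge 0$ gives both claimed inequalities.

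\textbf{Main obstacle.} The argument is elementary. The step that needs care is the degenerate case $r=0$, where the Schur complement cannot be used directly. The affine-in-$t$ argument above handles it.
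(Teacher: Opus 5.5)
Your proof is correct and follows essentially the paper's route: a Schur-complement argument combined with $\lambda_{\max}(\bm{P}) \le \mathrm{tr}(\bm{P})$ and $\lambda_{\max}(\bm{P}) \le \|\bm{P}\|_{\rm F}$. The only difference is which block is eliminated. The paper first enlarges $\bm{P}$ to $\mathrm{tr}(\bm{P})\bm{I}$ and takes the Schur complement of that block, which tacitly requires $\mathrm{tr}(\bm{P}) > 0$, a case it never addresses. You instead eliminate the scalar $r$ and treat $r = 0$ explicitly, so your version is slightly more complete.
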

\begin{proof}
Let $\lambda_i$ for each $i \in I(u)$ be an eigenvalue of the matrix $\bm{P}$, and $\lambda_{\max} \coloneqq \max_{i \in I(u)} \lambda_i$. 
Since $\mathrm{tr}(\bm{P}) = \sum_{i \in I(u)} \lambda_i \ge \lambda_{\max}$, we have $\bm{P} \preceq \lambda_{\max} \bm{I} \preceq \mathrm{tr}(\bm{P}) \bm{I}$.
From a property of the Schur complement~\cite{BoVa04}, it follows that 
\begin{align}
\begin{pmatrix}
\bm{P} & \bm{p} \\
\bm{p}^{\top} & r
\end{pmatrix} \succeq \bm{O} 
\quad & \Rightarrow \quad \begin{pmatrix}
\mathrm{tr}(\bm{P}) \bm{I} & \bm{p} \\
\bm{p}^{\top} & r
\end{pmatrix} \succeq \bm{O} \notag \\ 
& \Rightarrow \quad
r - \bm{p}^{\top} (\mathrm{tr}(\bm{P}) \bm{I})^{-1} \bm{p} \ge 0 \notag \\
& \Rightarrow \quad
\|\bm{p}\|_2^2 \le r \cdot \mathrm{tr}(\bm{P}). \notag
\end{align}
We can also prove that $\|\bm{p}\|_2^2 \le r \|\bm{P}\|_{\rm F}$ in the same manner because $\|\bm{P}\|_{\rm F} 
= \sqrt{\sum_{i \in I(u)} \lambda_i^2} \ge \lambda_{\max}$. 
\end{proof}

\subsubsection{Trace-constrained Formulation}

To guarantee the compactness of the feasible set, we here impose the upper-bound constraints on the traces of matrices in problem~\eqref{eq:misdo3}: 
\begin{subequations}\label{eq:misdo_tr}
\begin{align}
\underset{\substack{\bm{X}, \bm{z}}}{\text{maximize}} \quad & f(\bm{X},\bm{z}) \label{eq:misdo_tr_obj}\\
\text {subject to} \quad 
& \bm{x} = \bm{z}, \label{eq:misdo_tr_con1} \\
& (\bm{X},\bm{z}) \in \mathcal{X} \times \mathcal{Z}, \label{eq:misdo_tr_con2} \\
& \mathrm{tr}(\bm{P}) \le \tau_P, \quad \mathrm{tr}(\bm{Q}) \le \tau_Q, \label{eq:misdo_tr_con3}
\end{align}
\end{subequations}
where $\tau_P,\tau_Q \in \mathbb{R}_{+}$ are trace upper-bound parameters. 

Let $\bar{f}_1 \in \mathbb{R}$ be a lower bound on the optimal objective value of problem~\eqref{eq:misdo_tr}; this lower bound can be obtained from any feasible solution to the problem~\eqref{eq:misdo_tr}. 
Therefore, the following relationship holds for any optimal solution $(\bm{X},\bm{z})$ to this problem: 
\begin{align}\label{eq:lb1}
\bar{f}_1 \le f(\bm{X},\bm{z}). 
\end{align}

We now define the trace-constrained feasible set by imposing Eqs.~\eqref{eq:misdo_tr_con3} and \eqref{eq:lb1} on the original feasible set $\mathcal{X}$ of Eq.~\eqref{eq:set_X}; that is,
\[
\mathcal{X}_1 \coloneqq \{\bm{X} = (\bm{p}, \bm{P}, \bm{q}, \bm{Q}, r, s, \bm{x}) \mid \mbox{Eq.~\eqref{eq:def_Q1}}\},
\]
where
\begin{subequations}\label{eq:def_Q1}
\begin{align}
& \bm{p}=-\bm{q} / 2-\bm{Q} \hat{\bm{\mu}}_u ,\label{eq:def_Q1_con1} \\
& \begin{pmatrix}
\bm{P} & \bm{p} \\
\bm{p}^{\top} & r
\end{pmatrix} \succeq \bm{O}, \quad 
\begin{pmatrix}
\bm{Q} & \bm{q} / 2+\bm{x} / 2 \\
(\bm{q} / 2+\bm{x} / 2)^{\top} & s
\end{pmatrix} \succeq \bm{O}, \label{eq:def_Q1_con2} \\
& \bm{0} \le \bm{x} \le \bm{1}, \label{eq:def_Q1_con3} \\
& \mathrm{tr}(\bm{P}) \le \tau_P, \quad \mathrm{tr}(\bm{Q}) \le \tau_Q, \label{eq:def_Q1_con4} \\
& \bar{f}_1 \le \Bigl(\hat{\bm{\mu}}_u \hat{\bm{\mu}}_u ^{\top} -\kappa_2 \hat{\bm{\Sigma}}_u \Bigr) \bullet \bm{Q} -\hat{\bm{\Sigma}}_u \bullet \bm{P} + 2 \hat{\bm{\mu}}_u^{\top} \bm{p} -\kappa_1 r - s. \label{eq:def_Q1_con5} 
\end{align}
\end{subequations}

The following proposition states the compactness of the trace-constrained feasible set $\mathcal{X}_1$. 

\begin{proposition}\label{prop:compact2} \rm 
When $\kappa_1 > 0$, the feasible set $\mathcal{X}_1$ defined by Eq.~\eqref{eq:def_Q1} is compact. 
\end{proposition}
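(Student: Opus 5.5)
Compactness in $\mathbb{R}^n$ means closed and bounded, so the plan is to prove these two properties separately.

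\textbf{Closedness.} Every defining condition in Eq.~\eqref{eq:def_Q1} is closed. The linear equality \eqref{eq:def_Q1_con1}, the box constraint \eqref{eq:def_Q1_con3}, the trace bounds \eqref{eq:def_Q1_con4}, and the affine inequality \eqref{eq:def_Q1_con5} are preimages of closed sets under continuous maps. The positive semidefinite constraints \eqref{eq:def_Q1_con2} are preimages of the closed cone of positive semidefinite matrices under affine maps. Hence $\mathcal{X}_1$ is an intersection of closed sets and is closed.

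\textbf{Boundedness.} I will bound each block of variables in turn.
\begin{enumerate}
\item From \eqref{eq:def_Q1_con2}, $\bm{P}\succeq\bm{O}$ and $\bm{Q}\succeq\bm{O}$, so their eigenvalues are nonnegative. The trace bounds then give $\|\bm{P}\|_{\rm F}\le\mathrm{tr}(\bm{P})\le\tau_P$ and $\|\bm{Q}\|_{\rm F}\le\tau_Q$.
\item The diagonal entries of the two PSD matrices give $r\ge 0$ and $s\ge 0$. The vector $\bm{x}$ is bounded by \eqref{eq:def_Q1_con3}.
\item To bound $r$ and $s$ from above, I use the objective lower bound \eqref{eq:def_Q1_con5}. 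The matrix terms are bounded via Cauchy--Schwarz, $|\bm{A}\bullet\bm{Q}|\le\|\bm{A}\|_{\rm F}\tau_Q$, and similarly for $\bm{P}$. For the linear term, Lemma~\ref{lemma:p_2^2} gives $\|\bm{p}\|_2\le\sqrt{r\tau_P}$, so $2\hat{\bm{\mu}}_u^\top\bm{p}\le 2\|\hat{\bm{\mu}}_u\|_2\sqrt{\tau_P}\sqrt{r}$.
\item Combining these, and writing $C_0,C_1$ for constants independent of the variables,
\[
\bar f_1 \le C_0 + C_1\sqrt{r} - \kappa_1 r - s .
\]
Since $s\ge 0$, this yields $\kappa_1 r - C_1\sqrt r \le C_0-\bar f_1$. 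Because $\kappa_1>0$, the left side tends to infinity as $r$ grows, so $r$ is bounded above.
\item With $r$ bounded, the same inequality bounds $s\le C_0+C_1\sqrt r-\kappa_1 r-\bar f_1$.
\item Boundedness of $r$ and $\mathrm{tr}(\bm{P})$ then bounds $\bm{p}$ through Lemma~\ref{lemma:p_2^2}.
\item Finally, \eqref{eq:def_Q1_con1} gives $\bm{q}=-2(\bm{p}+\bm{Q}\hat{\bm{\mu}}_u)$, which is bounded because $\bm{p}$ and $\bm{Q}$ are.
\end{enumerate}
Hence $\mathcal{X}_1$ is bounded, and together with closedness it is compact.

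The main obstacle is step~4, bounding $r$. The objective contains the term $2\hat{\bm{\mu}}_u^\top\bm{p}$, which can grow with $r$, so $r$ cannot be bounded directly. This is where Lemma~\ref{lemma:p_2^2} and the hypothesis $\kappa_1>0$ are essential: the lemma shows this term grows only like $\sqrt r$, while $-\kappa_1 r$ decreases linearly. Without $\kappa_1>0$, the variable $r$ could be unbounded.
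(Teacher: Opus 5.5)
Your proof is correct and follows essentially the same route as the paper: bound $\bm{P},\bm{Q}$ through the trace constraints, then use the objective lower bound with Cauchy--Schwarz and Lemma~\ref{lemma:p_2^2}, so that the $\sqrt{r}$ growth of $2\hat{\bm{\mu}}_u^\top\bm{p}$ is dominated by $-\kappa_1 r$, and finally recover bounds on $s$, $\bm{p}$, $\bm{q}$, and $\bm{x}$. The differences are cosmetic: the paper completes the square in $\sqrt{r}$ instead of arguing by growth, and bounds entries via $p_{ij}^2\le p_{ii}p_{jj}$ instead of $\|\bm{P}\|_{\rm F}\le\mathrm{tr}(\bm{P})$; your explicit closedness check and your derivation of $\bm{q}$ from the linear equality fill in two points the paper leaves implicit.
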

\begin{proof}
Let us suppose that $(\bm{p}, \bm{P}, \bm{q}, \bm{Q}, r, s, \bm{x}) \in \mathcal{X}_1$. 
Since $\bm{P} \coloneqq (p_{ij})_{(i,j) \in I(u) \times I(u)}$ is positive semidefinite from Eq.~\eqref{eq:def_Q1_con2}, its principal submatrices are also positive semidefinite, and thus, $p_{ij}^2 \le p_{ii} p_{jj} \le \tau_P^2$ from Eq.~\eqref{eq:def_Q1_con4} for all $(i,j) \in I(u) \times I(u)$. 
The same relationship holds for $\bm{Q} \succeq \bm{O}$, and therefore, the absolute values of all entries of $\bm{P}$ and $ \bm{Q}$ are bounded by $\tau_P$ and $\tau_Q$, respectively. 

Note that
\begin{align}
& \bar{f}_1 - \Bigl(\hat{\bm{\mu}}_u \hat{\bm{\mu}}_u ^{\top} -\kappa_2 \hat{\bm{\Sigma}}_u \Bigr) \bullet \bm{Q} + \hat{\bm{\Sigma}}_u \bullet \bm{P} \notag \\
\le~ & 2 \hat{\bm{\mu}}_u^{\top} \bm{p} -\kappa_1 r - s \quad \because \mbox{Eq.~\eqref{eq:def_Q1_con5}} \notag \\
\le~ & 2 \|\hat{\bm{\mu}}_u\|_2 \|\bm{p}\|_2 -\kappa_1 r - s \quad \because \mbox{Cauchy--Schwarz inequality} \notag \\
\le~ & 2 \|\hat{\bm{\mu}}_u\|_2 \sqrt{r \cdot \mathrm{tr}(\bm{P})} -\kappa_1 r - s \quad \because \mbox{Lemma~\ref{lemma:p_2^2}} \notag \\
\le~ & - \kappa_1 \left( \sqrt{r} - \frac{\|\hat{\bm{\mu}}_u\|_2 \sqrt{\mathrm{tr}(\bm{P})}}{\kappa_1} \right)^2 + \frac{\|\hat{\bm{\mu}}_u\|_2^2 \cdot \mathrm{tr}(\bm{P})}{\kappa_1} - s. \quad \because \mbox{$\kappa_1 > 0$}
\notag
\end{align}
It then follows that 
\begin{align}
& \kappa_1 \left( \sqrt{r} - \frac{\|\hat{\bm{\mu}}_u\|_2 \sqrt{\mathrm{tr}(\bm{P})}}{\kappa_1} \right)^2 +s \notag \\
\le~ & - \bar{f}_1 + \Bigl(\hat{\bm{\mu}}_u \hat{\bm{\mu}}_u ^{\top} -\kappa_2 \hat{\bm{\Sigma}}_u \Bigr) \bullet \bm{Q} - \hat{\bm{\Sigma}}_u \bullet \bm{P} + \frac{\|\hat{\bm{\mu}}_u\|_2^2 \cdot \mathrm{tr}(\bm{P})}{\kappa_1}, \notag 
\end{align}
which implies that $r, s \in \mathbb{R}_{+}$ are each contained in a bounded set. 
Then, Lemma~\ref{lemma:p_2^2} ensures that $\bm{p}, \bm{q} \in \mathbb{R}^{|I(u)|}$ are each contained in a bounded set. 
The proof is therefore completed by noting that $\bm{x} \in \mathbb{R}^{|I(u)|}$ is bounded by Eq.~\eqref{eq:def_Q1_con3}. 
\end{proof}

We now obtain the following convergence theorem of Algorithm~\ref{alg:PADM} for solving the trace-constrained problem~\eqref{eq:misdo_tr}. 
\begin{theorem}\label{thm_convergence2}\rm
Suppose that $\kappa_1 > 0$ and that $\tau_P,\tau_Q \in \mathbb{R}_{+}$ are sufficiently large numbers. 
Also suppose that $(\bm{X}^{(t)},\bm{z}^{(t)}) \to (\hat{\bm{X}},\hat{\bm{z}})$ for $t \to +\infty$, where $(\bm{X}^{(t)},\bm{z}^{(t)})$ is a partial maximizer to problem~\eqref{eq:misdo3_pen} with $\gamma = \gamma^{(t)}$ generated by Algorithm~\ref{alg:PADM} with $\mathcal{X} = \mathcal{X}_1$.
Then, $(\hat{\bm{X}},\hat{\bm{z}})$ is a stationary point to problem~\eqref{eq:misdo_tr}.     
\end{theorem}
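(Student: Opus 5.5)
The plan is to reduce the statement to the general convergence theorem of \citet{geissler2017penalty} for penalty alternating direction methods. That theorem says the following: if the two blocks range over nonempty compact sets, the objective and coupling constraints are continuous, and the iterates are partial maximizers of the penalized problem with $\gamma^{(t)}\to+\infty$, then any limit point of these iterates is either a partial maximizer of the $L_1$-infeasibility measure $\|\bm{x}-\bm{z}\|_1$ over the product set, or, if it is feasible for the coupling constraint, a stationary point of the original problem. So the work consists of checking the hypotheses for the pair $(\mathcal{X}_1,\mathcal{Z})$ and then ruling out the infeasible alternative.

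\textbf{Step 1: hypotheses.} First I would check nonemptiness of $\mathcal{X}_1$. I would take the feasible point of Lemma~\ref{lemma:nonempty} and choose $\bar f_1$ to be its objective value. Taking $\tau_P,\tau_Q\ge |I(u)|$, which is where ``sufficiently large'' is used, makes the trace constraints $\mathrm{tr}(\bm{I})=|I(u)|$ hold, so this point lies in $\mathcal{X}_1$. Compactness of $\mathcal{X}_1$ is Proposition~\ref{prop:compact2}, which uses $\kappa_1>0$, and compactness of $\mathcal{Z}$ is immediate since it is a finite set. The objective $f$ is linear and the coupling $\bm{x}=\bm{z}$ is affine, so the smoothness and continuity requirements hold. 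Adding the lower-bound constraint \eqref{eq:def_Q1_con5} does not change the optimal set of \eqref{eq:misdo_tr}, so stationarity with respect to $\mathcal{X}_1$ transfers to problem~\eqref{eq:misdo_tr}. The reason is that \eqref{eq:def_Q1_con5} is inactive at any point whose objective exceeds $\bar f_1$. If it is active at the limit point, I would note that it is a linear constraint whose multiplier can be absorbed, or I would simply choose $\bar f_1$ slightly smaller.

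\textbf{Step 2: excluding the infeasible case.} This is the main obstacle. Geissler et al.\ only guarantee that the limit $(\hat{\bm{X}},\hat{\bm{z}})$ is feasible under an additional assumption, or else they conclude that it is a partial minimizer of $\|\bm{x}-\bm{z}\|_1$. Here I would argue directly that $\hat{\bm{x}}=\hat{\bm{z}}$. Given any $\bm{z}\in\mathcal{Z}$, I would construct $\bm{X}\in\mathcal{X}_1$ with $\bm{x}=\bm{z}$, taking $\bm{Q}$, $\bm{q}$, $s$ as in Lemma~\ref{lemma:nonempty} after permuting coordinates. Then $\min_{\bm{X}\in\mathcal{X}_1}\|\bm{x}-\bm{z}\|_1=0$ for every $\bm{z}\in\mathcal{Z}$. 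This requires the objective bound \eqref{eq:def_Q1_con5} to hold at the constructed point, which I would secure by picking $\bar f_1$ below the minimum of these finitely many objective values.

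With that in hand, the partial-minimizer property at $\hat{\bm{z}}$ gives $\|\hat{\bm{x}}-\hat{\bm{z}}\|_1\le 0$. Alternatively, I would argue from the penalized objective. Partial maximality in $\bm{X}$ gives $f(\bm{X}^{(t)},\bm{z}^{(t)})-\gamma^{(t)}\|\bm{x}^{(t)}-\bm{z}^{(t)}\|_1\ge f(\bm{X}',\bm{z}^{(t)})$ for the feasible copy $\bm{X}'$ with $\bm{x}'=\bm{z}^{(t)}$. Since $f$ is bounded on the compact set $\mathcal{X}_1$, this yields $\|\bm{x}^{(t)}-\bm{z}^{(t)}\|_1\le C/\gamma^{(t)}\to0$, hence $\hat{\bm{x}}=\hat{\bm{z}}$.

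\textbf{Step 3: conclusion.} The limit is feasible for \eqref{eq:misdo_tr}, since $\mathcal{X}_1$ and $\mathcal{Z}$ are closed. The stationarity part of the cited theorem then applies and gives the claim. For that part I would quote their argument: pass to the limit in the partial optimality conditions together with the bounded multipliers $\gamma^{(t)}$ times sign vectors.
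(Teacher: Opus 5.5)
Your proposal is correct and follows the paper's route. It gets nonemptiness of $\mathcal{X}_1$ from the point of Lemma~\ref{lemma:nonempty} once $\tau_P,\tau_Q\ge|I(u)|$, compactness from Proposition~\ref{prop:compact2} together with finiteness of $\mathcal{Z}$, and continuous differentiability of the linear objective, and then invokes Theorem~8b of Geissler et al. Beyond that, you check two points that the paper's one-line proof passes over.

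\textbf{Feasibility of the limit.} Theorem~8b gives stationarity only for a limit that is already feasible. The paper never shows $\hat{\bm{x}}=\hat{\bm{z}}$, and it even remarks after the theorem that feasibility of the limit is not guaranteed. Your comparison with a copy $\bm{X}'\in\mathcal{X}_1$ having $\bm{x}'=\bm{z}^{(t)}$ gives $\gamma^{(t)}\|\bm{x}^{(t)}-\bm{z}^{(t)}\|_1\le\max_{\mathcal{X}_1}f-\bar f_1$, and this supplies exactly that. It relies on $\gamma^{(1)}>0$ and on taking $\bar f_1$ no larger than the objective value $f_0$ of the Lemma~\ref{lemma:nonempty} point. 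That value is the same for every $\bm{z}\in\mathcal{Z}$, because $f$ does not involve $\bm{x}$. The paper lets $\bar f_1$ come from any feasible solution, and for a larger $\bar f_1$ some $\bm{z}$ may have no copy in $\mathcal{X}_1$. So you are in effect proving the theorem for one particular admissible choice of $\bar f_1$.

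\textbf{The extra constraint $f\ge\bar f_1$.} The cited theorem gives stationarity for the problem over $\mathcal{X}_1$, which contains the constraint $f\ge\bar f_1$; problem~\eqref{eq:misdo_tr} does not. Your absorption argument handles this correctly: that constraint's gradient is $\nabla f$ itself, so its multiplier only rescales the objective. There is also a simpler route. The same comparison shows $f(\bm{X}^{(t)})\ge f_0$ for every $t$, so fixing $\bar f_1<f_0$ in advance makes the constraint inactive at the limit. Your fallback of choosing $\bar f_1$ smaller only after seeing an active constraint at the limit would not be legitimate, since the iterates themselves depend on $\bar f_1$.
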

\begin{proof}
When $\tau_P,\tau_Q \in \mathbb{R}_{+}$ are sufficiently large (e.g., $\tau_P,\tau_Q \ge |I(u)|$), Lemma~\ref{lemma:nonempty} and Proposition~\ref{prop:compact2} ensure that the feasible sets $\mathcal{X}_1$ and $\mathcal{Z}$ are nonempty and compact. 
Since the objective function of problem~\eqref{eq:misdo_tr} (i.e., Eq.~\eqref{eq:misdo_low_obj}) is continuously differentiable, the convergence follows from Theorem 8b \cite{geissler2017penalty}. 
\end{proof}

Note that this theorem shows the property of the limit point upon convergence, but does not guarantee the convergence of the solution sequence or the feasibility of limit point; however, \citet{moreira2022alternating} reported that they never encountered such issues in their numerical experiments. 

\subsubsection{Frobenius-norm-regularized Formulation}

To guarantee the compactness of the feasible set, we here add the Frobenius norm regularization terms to problem~\eqref{eq:misdo3}: 
\begin{subequations}\label{eq:misdo_reg}
\begin{align}
\underset{\substack{\bm{X}, \bm{z}}}{\text{maximize}} \quad & f(\bm{X},\bm{z}) - \lambda_P \|\bm{P}\|_{\mathrm{F}}^2 - \lambda_Q \|\bm{Q}\|_{\mathrm{F}}^2 \label{eq:misdo_reg_obj}\\
\text {subject to} \quad 
& \bm{x} = \bm{z}, \label{eq:misdo_reg_con1} \\
& (\bm{X},\bm{z}) \in \mathcal{X} \times \mathcal{Z}, \label{eq:misdo_reg_con2} 
\end{align}
\end{subequations}
where $\lambda_P,\lambda_Q \in \mathbb{R}_{+}$ are regularization weight parameters.

Let $\bar{f}_2 \in \mathbb{R}$ be a lower bound on the optimal objective value of problem~\eqref{eq:misdo_reg}; this lower bound can be obtained from any feasible solution to the problem~\eqref{eq:misdo_reg}. 
Therefore, the following relationship holds for any optimal solution $(\bm{X},\bm{z})$ to this problem: 
\begin{align}\label{eq:lb2}
\bar{f}_2 \le f(\bm{X},\bm{z}) - \lambda_P \|\bm{P}\|_{\mathrm{F}}^2 - \lambda_Q \|\bm{Q}\|_{\mathrm{F}}^2. 
\end{align}

We now define the Frobenius-norm-regularized feasible set by imposing Eq.~\eqref{eq:lb2} on the original feasible set $\mathcal{X}$ of Eq.~\eqref{eq:set_X}; that is, 
\[
\mathcal{X}_2 \coloneqq \{\bm{X} = (\bm{p}, \bm{P}, \bm{q}, \bm{Q}, r, s, \bm{x}) \mid \mbox{Eq.~\eqref{eq:def_Q2}}\},
\]
where
\begin{subequations}\label{eq:def_Q2}
\begin{align}
& \bm{p}=-\bm{q} / 2-\bm{Q} \hat{\bm{\mu}}_u ,\label{eq:def_Q2_con1} \\
& \begin{pmatrix}
\bm{P} & \bm{p} \\
\bm{p}^{\top} & r
\end{pmatrix} \succeq \bm{O}, \quad 
\begin{pmatrix}
\bm{Q} & \bm{q} / 2+\bm{x} / 2 \\
(\bm{q} / 2+\bm{x} / 2)^{\top} & s
\end{pmatrix} \succeq \bm{O}, \label{eq:def_Q2_con2} \\
& \bm{0} \le \bm{x} \le \bm{1}, \label{eq:def_Q2_con3} \\
& \bar{f}_2 \le \Bigl(\hat{\bm{\mu}}_u \hat{\bm{\mu}}_u ^{\top} -\kappa_2 \hat{\bm{\Sigma}}_u \Bigr) \bullet \bm{Q} -\hat{\bm{\Sigma}}_u \bullet \bm{P} + 2 \hat{\bm{\mu}}_u^{\top} \bm{p} -\kappa_1 r - s - \lambda_P \|\bm{P}\|_{\mathrm{F}}^2 - \lambda_Q \|\bm{Q}\|_{\mathrm{F}}^2. \label{eq:def_Q2_con4} 
\end{align}
\end{subequations}

The following proposition states the compactness of the Frobenius-norm-regularized feasible set $\mathcal{X}_2$. 
\begin{proposition}\label{prop:compact} \rm
Suppose that $\kappa_1 > 0$, $\lambda_P > 0$, and $\lambda_Q > 0$. 
Then, the feasible set $\mathcal{X}_2$ defined by Eq.~\eqref{eq:def_Q2} is compact. 
\end{proposition}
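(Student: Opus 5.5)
Closedness is the easy half. Every constraint in Eq.~\eqref{eq:def_Q2} is closed: Eq.~\eqref{eq:def_Q2_con1} is linear, the semidefinite cone is closed, the box in Eq.~\eqref{eq:def_Q2_con3} is closed, and the right-hand side of Eq.~\eqref{eq:def_Q2_con4} is continuous in $\bm{X}$. Hence $\mathcal{X}_2$ is an intersection of closed sets, and the real work is to show it is bounded. I would follow the structure of the proof of Proposition~\ref{prop:compact2}. The Frobenius penalties now play the role of the trace bounds, and I would use the second inequality of Lemma~\ref{lemma:p_2^2}, $\|\bm{p}\|_2^2 \le r\|\bm{P}\|_{\rm F}$.

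The first step is to produce a single scalar inequality. Starting from Eq.~\eqref{eq:def_Q2_con4}, I bound $2\hat{\bm{\mu}}_u^\top \bm{p}$ using Cauchy--Schwarz and Lemma~\ref{lemma:p_2^2}, which gives $2\|\hat{\bm{\mu}}_u\|_2\sqrt{r\|\bm{P}\|_{\rm F}}$. Completing the square in $\sqrt{r}$, exactly as before, then yields
\[
\kappa_1\Bigl(\sqrt{r}-\tfrac{\|\hat{\bm{\mu}}_u\|_2\sqrt{\|\bm{P}\|_{\rm F}}}{\kappa_1}\Bigr)^2 + s + \lambda_P\|\bm{P}\|_{\rm F}^2 + \lambda_Q\|\bm{Q}\|_{\rm F}^2 \le -\bar f_2 + \bigl(\hat{\bm{\mu}}_u\hat{\bm{\mu}}_u^\top-\kappa_2\hat{\bm{\Sigma}}_u\bigr)\bullet\bm{Q} - \hat{\bm{\Sigma}}_u\bullet\bm{P} + \tfrac{\|\hat{\bm{\mu}}_u\|_2^2}{\kappa_1}\|\bm{P}\|_{\rm F}.
\]

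The main obstacle is that, unlike in Proposition~\ref{prop:compact2}, $\bm{P}$ and $\bm{Q}$ are not bounded a priori. I would bound them from this inequality itself. Each linear term on the right is at most a constant times $\|\bm{P}\|_{\rm F}$ or $\|\bm{Q}\|_{\rm F}$; for instance, $|\hat{\bm{\Sigma}}_u\bullet\bm{P}|\le\|\hat{\bm{\Sigma}}_u\|_{\rm F}\|\bm{P}\|_{\rm F}$. All terms on the left are nonnegative, since $s\ge0$ follows from the semidefinite constraint. Dropping the square and $s$ leaves
\[
\lambda_P a^2+\lambda_Q b^2 \le C_0 + C_1 a + C_2 b, \qquad a = \|\bm{P}\|_{\rm F},\; b = \|\bm{Q}\|_{\rm F}.
\]
Because $\lambda_P,\lambda_Q>0$, the left side is quadratic and coercive while the right side is linear, so this forces $a$ and $b$ to be bounded. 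This is where positivity of $\lambda_P$ and $\lambda_Q$ is used.

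With $\|\bm{P}\|_{\rm F}$ and $\|\bm{Q}\|_{\rm F}$ bounded, the right-hand side of the displayed inequality is bounded above. Then $s$ is bounded, and the squared term is bounded. Since $\kappa_1>0$, the latter bounds $\sqrt{r}$ and hence $r$. Lemma~\ref{lemma:p_2^2} then bounds $\bm{p}$. Next, $\bm{q}=-2(\bm{p}+\bm{Q}\hat{\bm{\mu}}_u)$ from Eq.~\eqref{eq:def_Q2_con1} is bounded, and $\bm{x}$ is bounded by Eq.~\eqref{eq:def_Q2_con3}. So $\mathcal{X}_2$ is closed and bounded, hence compact.
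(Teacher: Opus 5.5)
Your proposal is correct and follows essentially the same route as the paper's proof. Both use Cauchy--Schwarz with the Frobenius-norm inequality of Lemma~\ref{lemma:p_2^2}, complete the square in $\sqrt{r}$, drop the nonnegative square and $s$ to get a quadratic bound on $\|\bm{P}\|_{\rm F}$ and $\|\bm{Q}\|_{\rm F}$ (using $\lambda_P,\lambda_Q>0$), and then bound $r$, $s$, $\bm{p}$, $\bm{q}$, and $\bm{x}$ in turn. You also make explicit two points the paper leaves implicit: that $\mathcal{X}_2$ is closed, and that $\bm{q}$ is bounded through Eq.~\eqref{eq:def_Q2_con1} rather than directly by Lemma~\ref{lemma:p_2^2}.
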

\begin{proof}
Let us suppose that $(\bm{p}, \bm{P}, \bm{q}, \bm{Q}, r, s, \bm{x}) \in \mathcal{X}_2$. 
Note that 
\begin{align}
& \bar{f}_2 - \Bigl(\hat{\bm{\mu}}_u \hat{\bm{\mu}}_u ^{\top} -\kappa_2 \hat{\bm{\Sigma}}_u \Bigr) \bullet \bm{Q} + \hat{\bm{\Sigma}}_u \bullet \bm{P} + \lambda_P \|\bm{P}\|_{\mathrm{F}}^2 + \lambda_Q \|\bm{Q}\|_{\mathrm{F}}^2 \notag \\
\le~ & 2 \hat{\bm{\mu}}_u^{\top} \bm{p} -\kappa_1 r - s \quad \because \mbox{Eq.~\eqref{eq:def_Q2_con4}} \notag \\
\le~ & 2 \|\hat{\bm{\mu}}_u\|_2 \|\bm{p}\|_2 -\kappa_1 r - s \quad \because \mbox{Cauchy--Schwarz inequality} \notag \\
\le~ & 2 \|\hat{\bm{\mu}}_u\|_2 \sqrt{r \|\bm{P}\|_{\rm F}} -\kappa_1 r - s \quad \because \mbox{Lemma~\ref{lemma:p_2^2}} \notag \\
\le~ & - \kappa_1 \left( \sqrt{r} - \frac{\|\hat{\bm{\mu}}_u\|_2 \sqrt{\|\bm{P}\|_{\rm F}}}{\kappa_1} \right)^2 + \frac{\|\hat{\bm{\mu}}_u\|_2^2 \|\bm{P}\|_{\rm F}}{\kappa_1} - s \quad \because \mbox{$\kappa_1 > 0$} \label{eq:rs} \\
\le~ & \frac{\|\hat{\bm{\mu}}_u\|_2^2 \|\bm{P}\|_{\rm F}}{\kappa_1}. \quad \because \mbox{$\kappa_1, s \ge 0$} \label{eq:muPkappa}
\end{align}
It follows from Eq.~\eqref{eq:muPkappa} that
\begin{align}
& \lambda_P \|\bm{P}\|_{\mathrm{F}}^2 + \lambda_Q \|\bm{Q}\|_{\mathrm{F}}^2 \notag \\
\le~ & -\bar{f}_2 + \Bigl(\hat{\bm{\mu}}_u \hat{\bm{\mu}}_u ^{\top} -\kappa_2 \hat{\bm{\Sigma}}_u \Bigr) \bullet \bm{Q} -\hat{\bm{\Sigma}}_u \bullet \bm{P} + \frac{\|\hat{\bm{\mu}}_u\|_2^2 \|\bm{P}\|_{\rm F}}{\kappa_1} \notag \\
\le~ & -\bar{f}_2 + \| \hat{\bm{\mu}}_u \hat{\bm{\mu}}_u ^{\top} -\kappa_2 \hat{\bm{\Sigma}}_u \|_{\rm F} \|\bm{Q}\|_{\rm F} + \|\hat{\bm{\Sigma}}_u\|_{\rm F} \|\bm{P}\|_{\rm F} + \frac{\|\hat{\bm{\mu}}_u\|_2^2 \|\bm{P}\|_{\rm F}}{\kappa_1}. \\
& \quad \because \mbox{Cauchy--Schwarz inequality} \notag
\end{align}
Since $\lambda_P > 0$ and $\lambda_Q > 0$, matrices $\bm{P}, \bm{Q} \in \mathbb{R}^{|I(u)| \times |I(u)|}$ are each contained in a bounded set.

Moreover, it follows from Eq.~\eqref{eq:rs} that 
\begin{align}
\kappa_1 \left( \sqrt{r} - \frac{\|\hat{\bm{\mu}}_u\|_2 \sqrt{\|\bm{P}\|_{\rm F}}}{\kappa_1} \right)^2 +s 
\le~ & - \bar{f}_2 + \Bigl(\hat{\bm{\mu}}_u \hat{\bm{\mu}}_u ^{\top} -\kappa_2 \hat{\bm{\Sigma}}_u \Bigr) \bullet \bm{Q} - \hat{\bm{\Sigma}}_u \bullet \bm{P} \notag \\
& - \lambda_P \|\bm{P}\|_{\mathrm{F}}^2 - \lambda_Q \|\bm{Q}\|_{\mathrm{F}}^2 
+ \frac{\|\hat{\bm{\mu}}_u\|_2^2 \|\bm{P}\|_{\rm F}}{\kappa_1}, \notag 
\end{align}
which implies that $r, s \in \mathbb{R}_{+}$ are each contained in a bounded set. 
Then, Lemma~\ref{lemma:p_2^2} ensures that $\bm{p}, \bm{q} \in \mathbb{R}^{|I(u)|}$ are each contained in a bounded set. 
The proof is therefore completed by noting that $\bm{x} \in \mathbb{R}^{|I(u)|}$ is bounded by Eq.~\eqref{eq:def_Q2_con3}. 
\end{proof}

We now obtain the following convergence theorem of Algorithm~\ref{alg:PADM} for solving the Frobenius-norm-regularized problem~\eqref{eq:misdo_reg}. 
\begin{theorem}\label{thm_convergence1}\rm
Suppose that $\kappa_1 > 0$, $\lambda_P > 0$, and $\lambda_Q > 0$. 
Also suppose that $(\bm{X}^{(t)},\bm{z}^{(t)}) \to (\hat{\bm{X}},\hat{\bm{z}})$ for $t \to +\infty$, where $(\bm{X}^{(t)},\bm{z}^{(t)})$ is a partial maximizer to problem~\eqref{eq:misdo3_pen} with $\gamma = \gamma^{(t)}$ generated by Algorithm~\ref{alg:PADM} with $\mathcal{X} = \mathcal{X}_2$.
Then, $(\hat{\bm{X}},\hat{\bm{z}})$ is a stationary point to problem~\eqref{eq:misdo_reg}.     
\end{theorem}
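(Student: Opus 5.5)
The proof will mirror that of Theorem~\ref{thm_convergence2}: we check the hypotheses of the general PADM convergence result of \citet{geissler2017penalty} (Theorem 8b) for the pair of feasible sets $\mathcal{X}_2$ and $\mathcal{Z}$, with the objective of problem~\eqref{eq:misdo_reg}. These hypotheses are that both blocks of the feasible set are nonempty and compact, that the objective is continuously differentiable, and that the iterates are partial maximizers (in the sense of Definition~\ref{def}) converging to a limit.

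First, nonemptiness. $\mathcal{Z}$ is clearly nonempty. For $\mathcal{X}_2$, the feasible point in Eq.~\eqref{eq:fsblsol} from Lemma~\ref{lemma:nonempty} satisfies Eqs.~\eqref{eq:def_Q2_con1}--\eqref{eq:def_Q2_con3}. Constraint~\eqref{eq:def_Q2_con4} also holds there, provided $\bar{f}_2$ is chosen as the regularized objective value of some feasible solution, for instance this one. Indeed, $\bar{f}_2$ is defined as a lower bound achieved by a feasible solution, so that solution lies in $\mathcal{X}_2$.

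Second, compactness. $\mathcal{Z}$ is a finite set, hence compact. $\mathcal{X}_2$ is bounded by Proposition~\ref{prop:compact}, which uses $\kappa_1,\lambda_P,\lambda_Q>0$. It is also closed, since it is cut out by a linear equality, two closed PSD cone constraints, box constraints, and the superlevel set of a continuous function. Hence $\mathcal{X}_2$ is compact.

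Third, smoothness. The objective in Eq.~\eqref{eq:misdo_reg_obj} is $f(\bm{X},\bm{z})$, which is linear, minus $\lambda_P\|\bm{P}\|_{\rm F}^2+\lambda_Q\|\bm{Q}\|_{\rm F}^2$, which is a convex quadratic. It is therefore continuously differentiable. The penalized problem with this objective has the separable $L_1$ penalty form required by Theorem 8b.

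Invoking Theorem 8b then shows that the limit $(\hat{\bm{X}},\hat{\bm{z}})$ of the partial maximizers is a stationary point of problem~\eqref{eq:misdo_reg}.

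The main subtlety is a consistency issue. When Algorithm~\ref{alg:PADM} runs with $\mathcal{X}=\mathcal{X}_2$, the objective must be the regularized one, since problem~\eqref{eq:misdo3_pen} written with $f$ alone is not the penalized version of problem~\eqref{eq:misdo_reg}. The proof should therefore interpret $f(\cdot\mid\gamma)$ as including the regularization terms. We also need that adding constraint~\eqref{eq:def_Q2_con4} does not change the stationary points relevant to problem~\eqref{eq:misdo_reg}. This holds because every optimal solution satisfies Eq.~\eqref{eq:lb2}.
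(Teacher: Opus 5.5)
Your proposal is correct and follows the paper's proof step for step: nonemptiness of $\mathcal{X}_2$ and $\mathcal{Z}$ via Lemma~\ref{lemma:nonempty}, compactness via Proposition~\ref{prop:compact}, continuous differentiability of the objective, and then Theorem~8b of~\cite{geissler2017penalty}; your remarks that $\bar{f}_2$ must be attained by a feasible point, that $\mathcal{X}_2$ is closed, and that the penalized objective must include $-\lambda_P\|\bm{P}\|_{\rm F}^2-\lambda_Q\|\bm{Q}\|_{\rm F}^2$ are sensible refinements the paper leaves implicit. One small caveat: your closing justification (every optimal solution satisfies Eq.~\eqref{eq:lb2}) only shows that constraint~\eqref{eq:def_Q2_con4} preserves optimal solutions, not stationary points; the correct reason is that a multiplier $\lambda \ge 0$ on a constraint of the form ``objective $\ge \bar{f}_2$'' can be absorbed by dividing the stationarity condition by $1+\lambda$, and in any case the paper simply reads the conclusion as stationarity over $\mathcal{X}_2 \times \mathcal{Z}$.
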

\begin{proof}
Lemma~\ref{lemma:nonempty} and Proposition~\ref{prop:compact} ensure that the feasible sets $\mathcal{X}_2$ and $\mathcal{Z}$ are nonempty and compact. 
Since the objective function of the problem~\eqref{eq:misdo_reg} (cf.~Eq.~\eqref{eq:misdo_low_obj}) is continuously differentiable, the convergence follows from Theorem 8b~\cite{geissler2017penalty}. 
\end{proof}

\section{Experimental Results}\label{sec:exp_res}

In this section, we evaluate the effectiveness of our recommendation selection method through computational experiments using three publicly available datasets.
All experiments were performed on a macOS 14.4 computer equipped with an Apple M3 processor (8 cores, 4.05 GHz) and 24 GB of RAM.

\subsection{Rating Datasets}\label{subsec:datasets}
We used three publicly available rating datasets widely used in recommender systems research: the MovieLens\footnote[1]{\url{https://grouplens.org/datasets/movielens/100k/}} (100K), Yahoo! R3\footnote[2]{\url{https://webscope.sandbox.yahoo.com/}} (Yahoo! music ratings for user selected and randomly selected songs, v.~1.0), and BookCrossing\footnote[3]{\url{https://grouplens.org/datasets/book-crossing/}} datasets. 
\begin{itemize}
\item \textbf{MovieLens}: This dataset contains 100,000 ratings on a 5-point scale (i.e., $1,2,\ldots,5$) provided by 943 users for 1,682 movies. 
For our experiments, we extracted 568 users who provided at least 50 ratings.
\vspace{1mm}
\item \textbf{Yahoo! R3}: This dataset was pre-split into training and testing sets containing 15,400 and 5,400 users, respectively, with each user rating a subset of the 1,000 songs on a 5-point scale (i.e., $1,2,\ldots,5$).
For our experiments, we extracted 5,050 users who provided at least 20 ratings in the training set.
\vspace{1mm}
\item \textbf{BookCrossing}: The dataset contains 1,149,780 ratings on an 11-point scale (i.e., $0,1,\ldots,10$) provided by 278,858 users for 271,376 books.
For our experiments, we extracted 3,156 users who provided at least 20 ratings.
\end{itemize}

For these datasets, each user's ratings were randomly split into a training set (60\%) for selecting recommendations and a testing set (40\%) for assessing the quality of those recommendations. 
Recommendations were selected for 100 randomly sampled target users, and the results were averaged over 10 independent runs. 

\subsection{Recommendation Selection Methods}\label{subsec:methods_comp}

We evaluated the performance of the following three methods for recommendation selection:
\begin{itemize}
    \item \textbf{MV}: Mean--variance portfolio optimization model~\cite{Wa09,HuZh11,YaYa26};
    \vspace{1mm}
    \item \textbf{RMV($\Gamma^{(\mu)}, \Gamma^{(\sigma)}$)}: Robust mean--variance portfolio optimization model~\cite{YaIk24};
    \vspace{1mm}
    \item \textbf{DRO}: Our DRO model~\eqref{eq:misdo}, which was solved using the PADM algorithm (Algorithm~\ref{alg:PADM});
\end{itemize}
where $\Gamma^{(\mu)}, \Gamma^{(\sigma)} \in \mathbb{Z}_{+}$ are the cardinality parameters of the uncertainty sets for the expectation and the covariance of ratings, respectively; see Yanagi et al.~\cite{YaIk24} for details on the uncertainty sets. 

For rating prediction, we used singular value decomposition from the Python \texttt{Surprise} library\footnote[4]{\url{http://surpriselib.com/}}~\cite{Hug20}, where the hyperparameters were set to 100 factors, a learning rate of 0.01, and a regularization weight of 0.1.
For each user $u \in U$, the candidate item set $I(u)$ was composed of all the items rated by the user in the corresponding testing set. 
The number of recommended items was set as $N \in \{3, 5\}$.
To stabilize the estimation of the rating covariance matrix, we applied the shrinkage estimation method~\cite{YaYa26} for each user $u \in U$ as
\[
\hat{\bm{\Sigma}}_u = 0.25\bm{S}_u + 0.75\bm{F}_u,
\]
where $\bm{S}_u \in \mathbb{R}^{\lvert I(u) \rvert \times \lvert I(u) \rvert}$ is the sample covariance matrix, and $\bm{F}_u \in \mathbb{R}^{\lvert I(u) \rvert \times \lvert I(u) \rvert}$ is a target matrix based on matrix completion; see Yanagi et al.~\cite{YaYa26} for technical details on the shrinkage estimation.

For the MV and RMV methods, we used Gurobi Optimizer\footnote[5]{\url{https://www.gurobi.com/}} 11.0.3 to solve the corresponding portfolio optimization problems, with the computation time limited to 5 s per user. 
These objective functions are defined as a weighted sum of the mean and variance of the user rating sum (i.e., $\max\{(1 - \alpha) \cdot \mathrm{mean} - \alpha \cdot \mathrm{variance}\}$), where the risk aversion parameter was varied as $\alpha \in \{0.0, 0.1, \ldots, 0.5\}$.

For the RMV method, we defined the uncertainty ranges for the expectation and covariance of ratings as
\begin{align}
& \delta_{ui}^{(\mu)} = \frac{\sqrt{\hat{\sigma}_{ii}}}{\sqrt{\max\{1, n_i\}}} \quad ((u,i) \in U \times I), \quad 
\delta_{ij}^{(\sigma)} = \frac{0.2 \cdot \hat{\sigma}_{ij}}{\sqrt{\max\{1,n_{ij}\}}} \quad ((i,j) \in I \times I), \notag
\end{align}
where $n_i$ is the number of users who rated item $i \in I$, and $n_{ij}$ is the number of users who rated both items $i,j \in I$.
We also set the cardinality parameters for the expectation and the covariance as $(\Gamma^{(\mu)}, \Gamma^{(\sigma)}) \in \{(2, 25), (4, 50)\}$; see Yanagi et al.~\cite{YaIk24} for further details on the uncertainty sets.

For our DRO method, we set $\kappa_1,\kappa_2 \in \{0.1, 1.0, 5.0\}$ as the ambiguity parameters in Eq.~\eqref{eq:set_unc}. 
In Algorithm \ref{alg:PADM}, we solved the Frobenius-norm-regularized problem~\eqref{eq:misdo_reg}, with the regularization weight parameters $\lambda_P = \lambda_Q = 10$. 
We used MOSEK\footnote[6]{\url{https://www.mosek.com/}} 10.2.2 to solve the SDO subproblems. 
We set the convergence threshold parameters $\varepsilon_{\text{in}} = \varepsilon_{\text{out}} = 10^{-4}$, the penalty weight scaling parameter $\theta = 10$, and the initial penalty weight $\gamma^{(1)} = 10^{-4}$.
For the initial solutions ${\bm x}^{(0)}$ and ${\bm z}^{(0)}$, we set the entries corresponding to the top ten items with predicted ratings to $N/10$, and all the remaining entries to 0.

\subsection{Evaluation Metrics}\label{subsec:eval}

To evaluate the recommendation quality in terms of accuracy and diversity for testing sets, we used the following two metrics~\cite{ShGu11}: 
\begin{itemize}
\item \textbf{F1 score}: Average F1 score of recommendations across target users; 
\vspace{1mm}
\item $\bf 1 - \mbox{\bf Gini}$: One minus the Gini coefficient, which was calculated for the number of recommendations across items. 
\end{itemize}

The F1 score measured recommendation accuracy based on the intersection between recommended items and highly rated items.
Here, highly rated items were defined as items with a rating of 4 or higher in the MovieLens dataset, a rating of 3 or higher in the Yahoo! R3 dataset, and a rating of 7 or higher in the BookCrossing dataset.
The higher the F1 score, the more accurate the recommendations.

The Gini coefficient measured the inequality in the number of recommendations among items.
In other words, 
the higher the value of ``$1 - \mbox{Gini}$,'' the more diverse the recommendations.

\begin{figure}[!tbh]
    \centering
    \begin{subfigure}{0.49\linewidth}
        \centering
        \includegraphics[width=\linewidth]{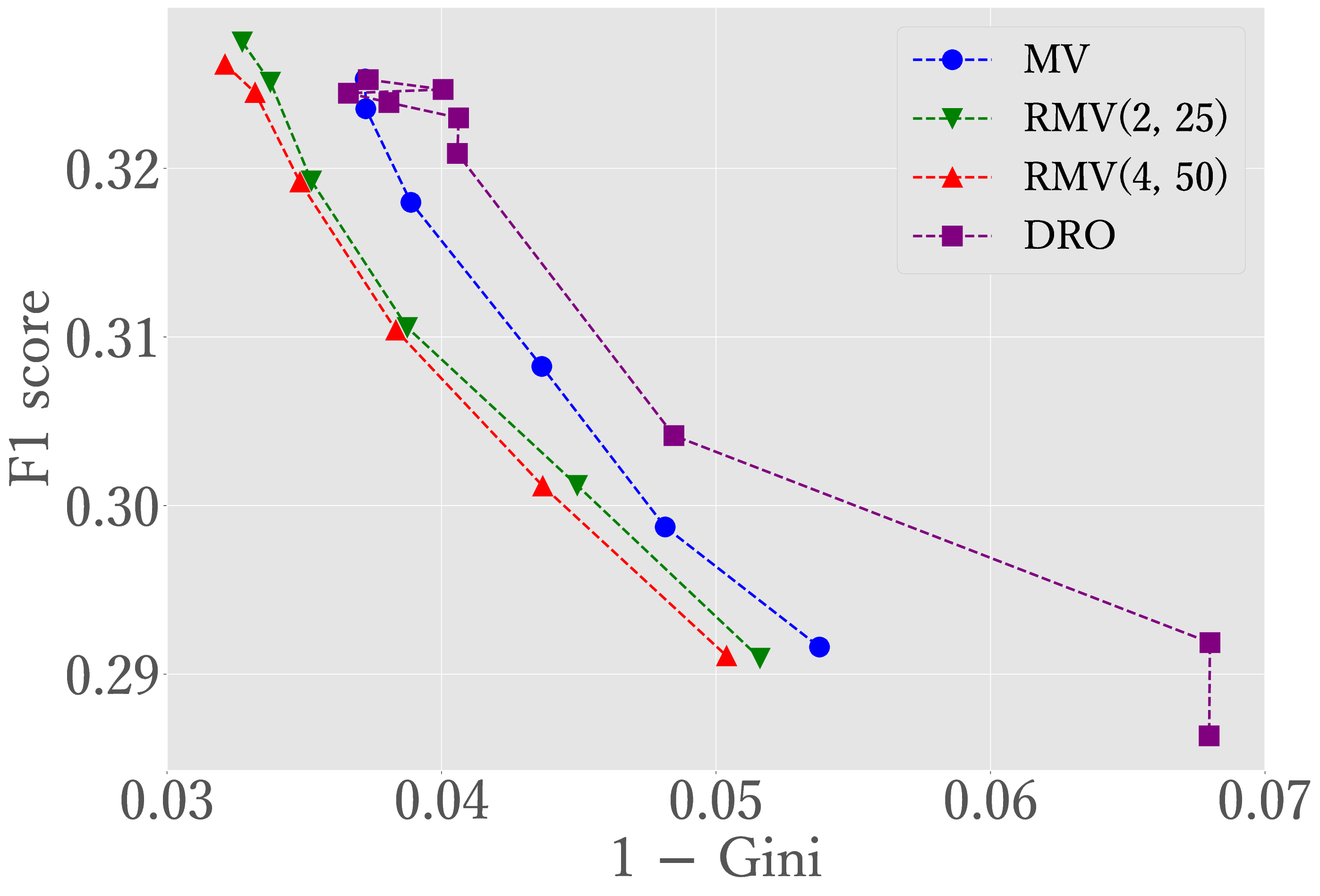}
        \caption{MovieLens dataset ($N=3$)}
        \label{fig:movielens_3}
    \end{subfigure}
    \begin{subfigure}{0.49\linewidth}
        \centering
        \includegraphics[width=\linewidth]{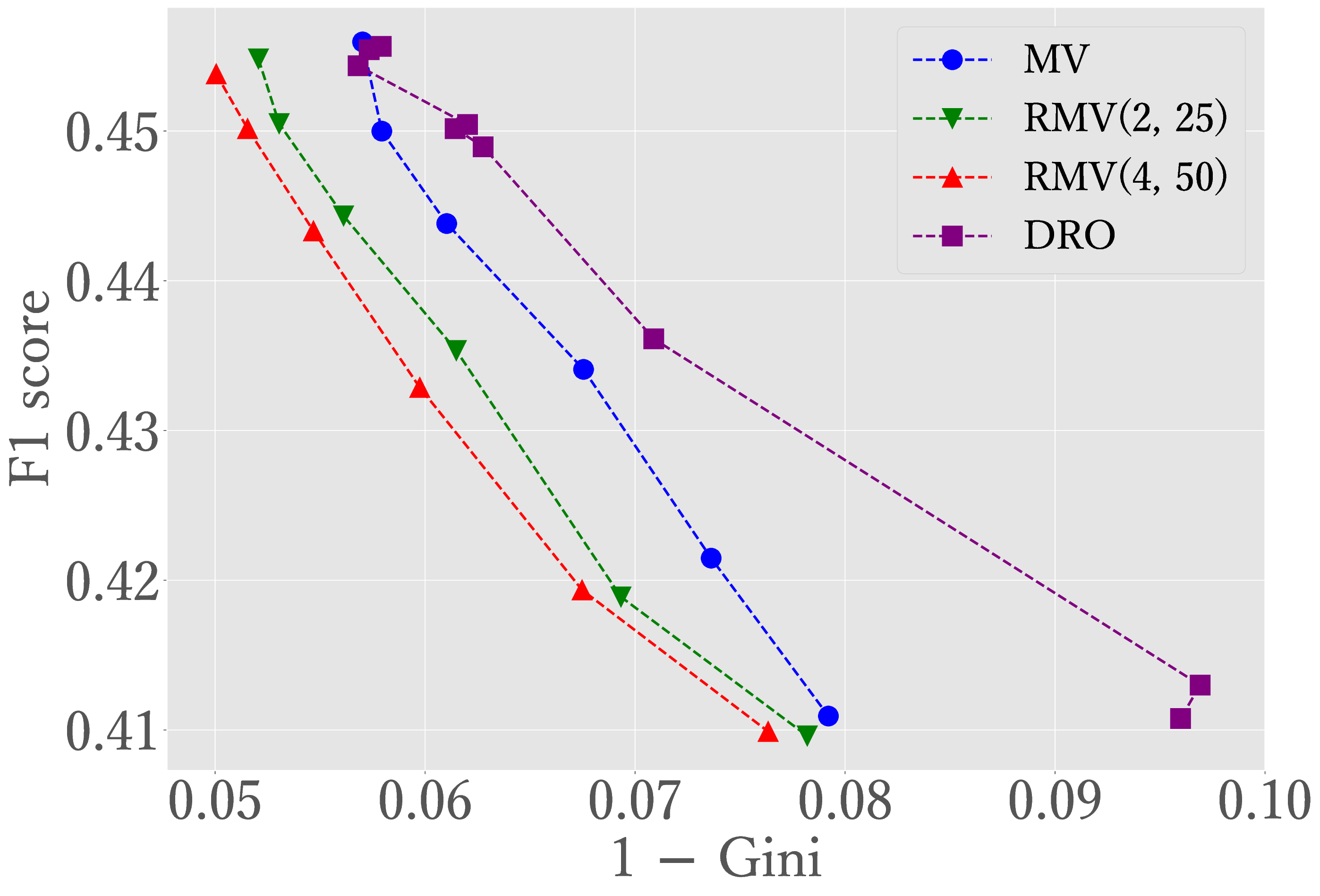}
        \caption{MovieLens dataset ($N=5$)}
        \label{fig:movielens_5}
    \end{subfigure}

    \begin{subfigure}{0.49\linewidth}
        \centering
        \includegraphics[width=\linewidth]{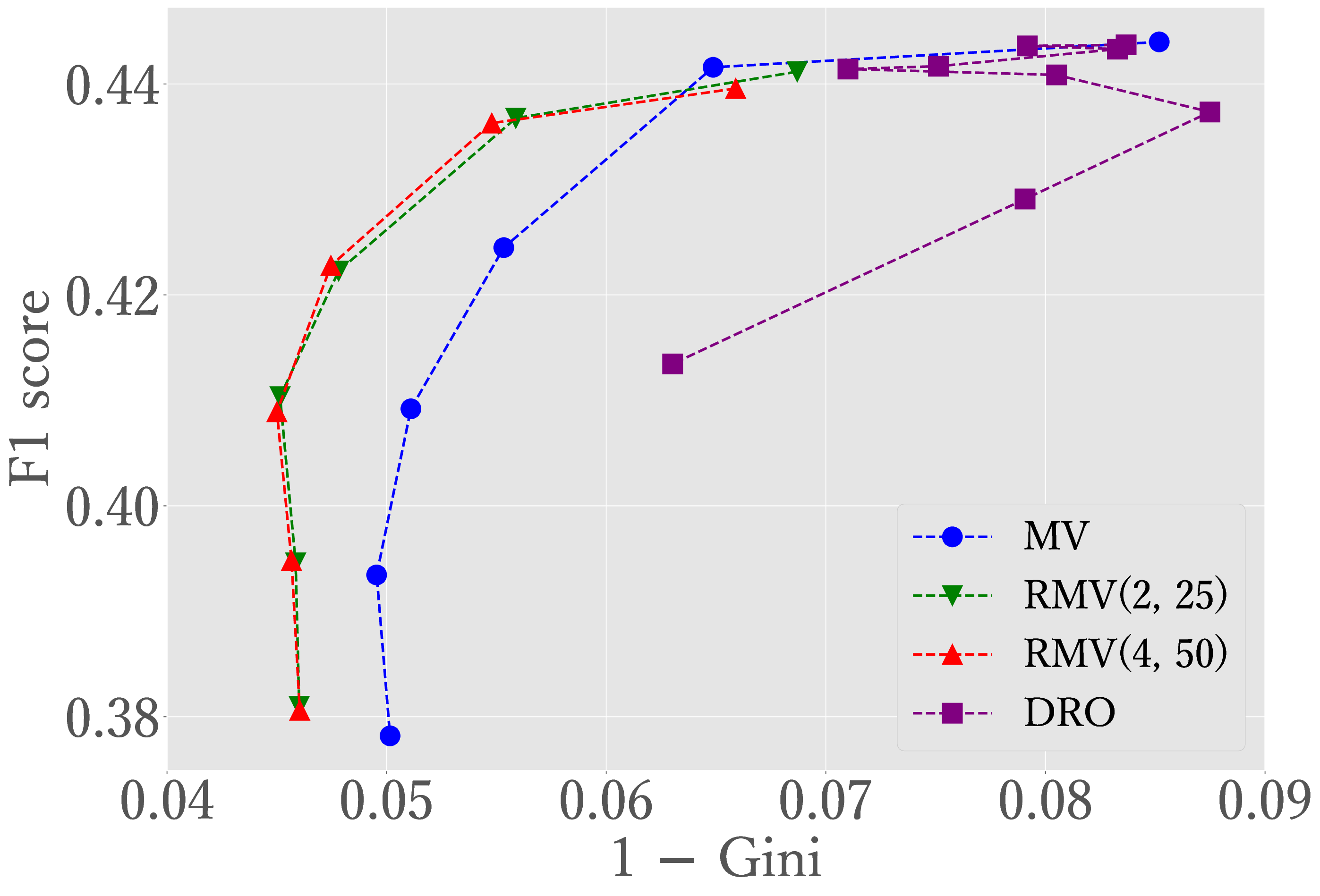}
        \caption{Yahoo! R3 dataset ($N=3$)}
        \label{fig:r3_3}
    \end{subfigure}
    \medskip
    \begin{subfigure}{0.49\linewidth}
        \centering
        \includegraphics[width=\linewidth]{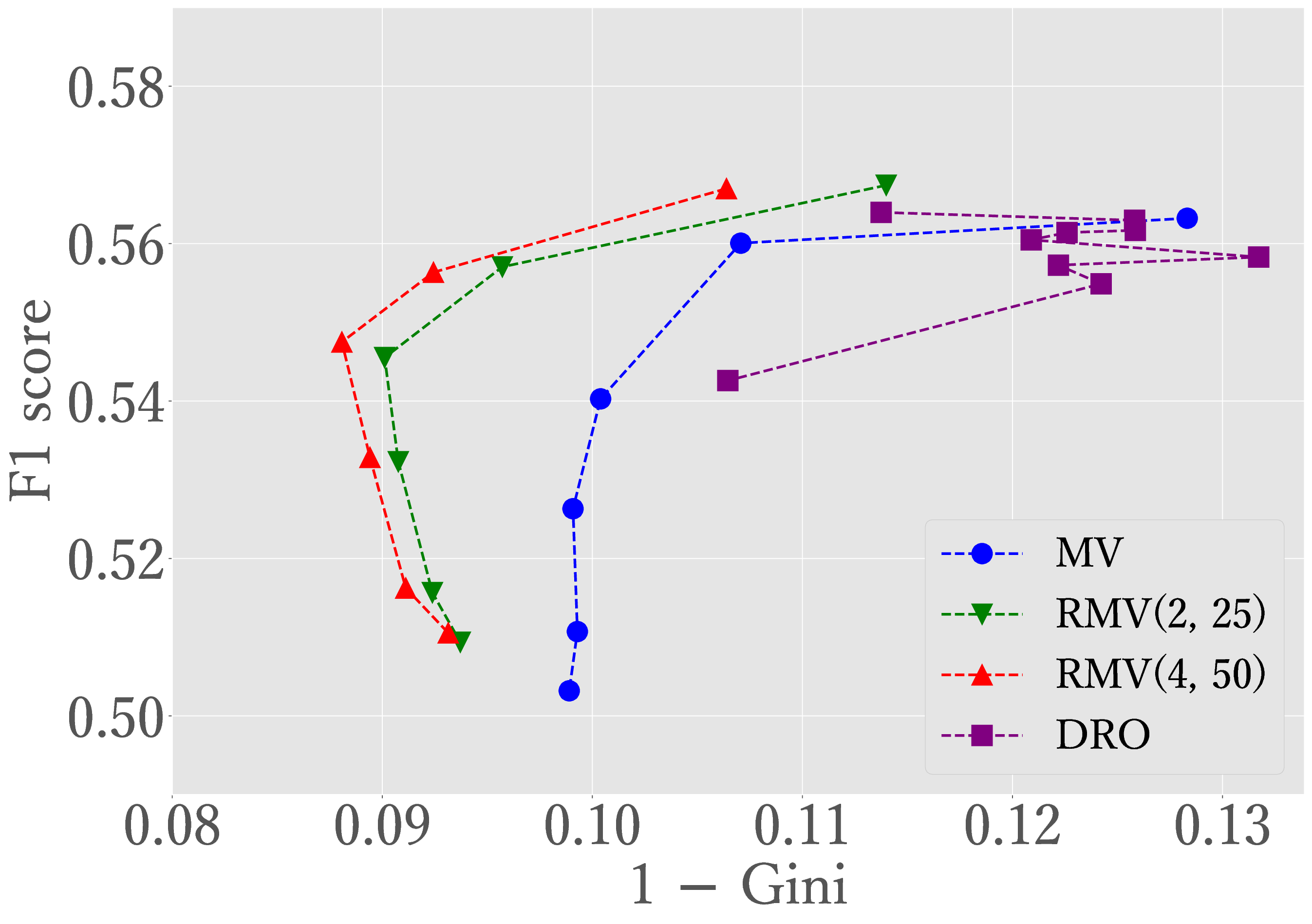}
        \caption{Yahoo! R3 dataset ($N=5$)}
        \label{fig:r3_5}
    \end{subfigure}

    \begin{subfigure}{0.49\linewidth}
        \centering
        \includegraphics[width=\linewidth]{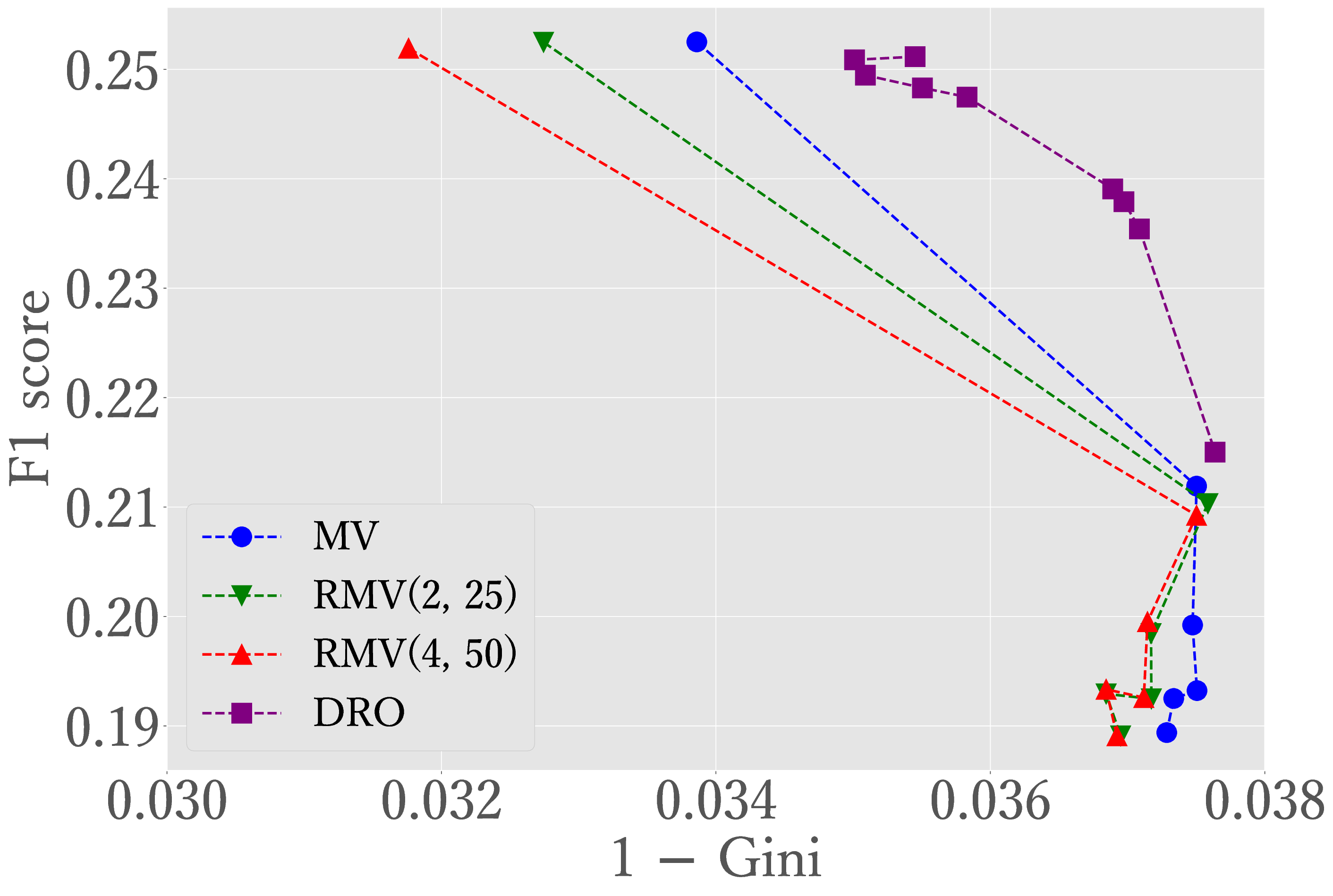}
        \caption{BookCrossing dataset ($N=3$)}
        \label{fig:book_3}
    \end{subfigure}
    \begin{subfigure}{0.49\linewidth}
        \centering
        \includegraphics[width=\linewidth]{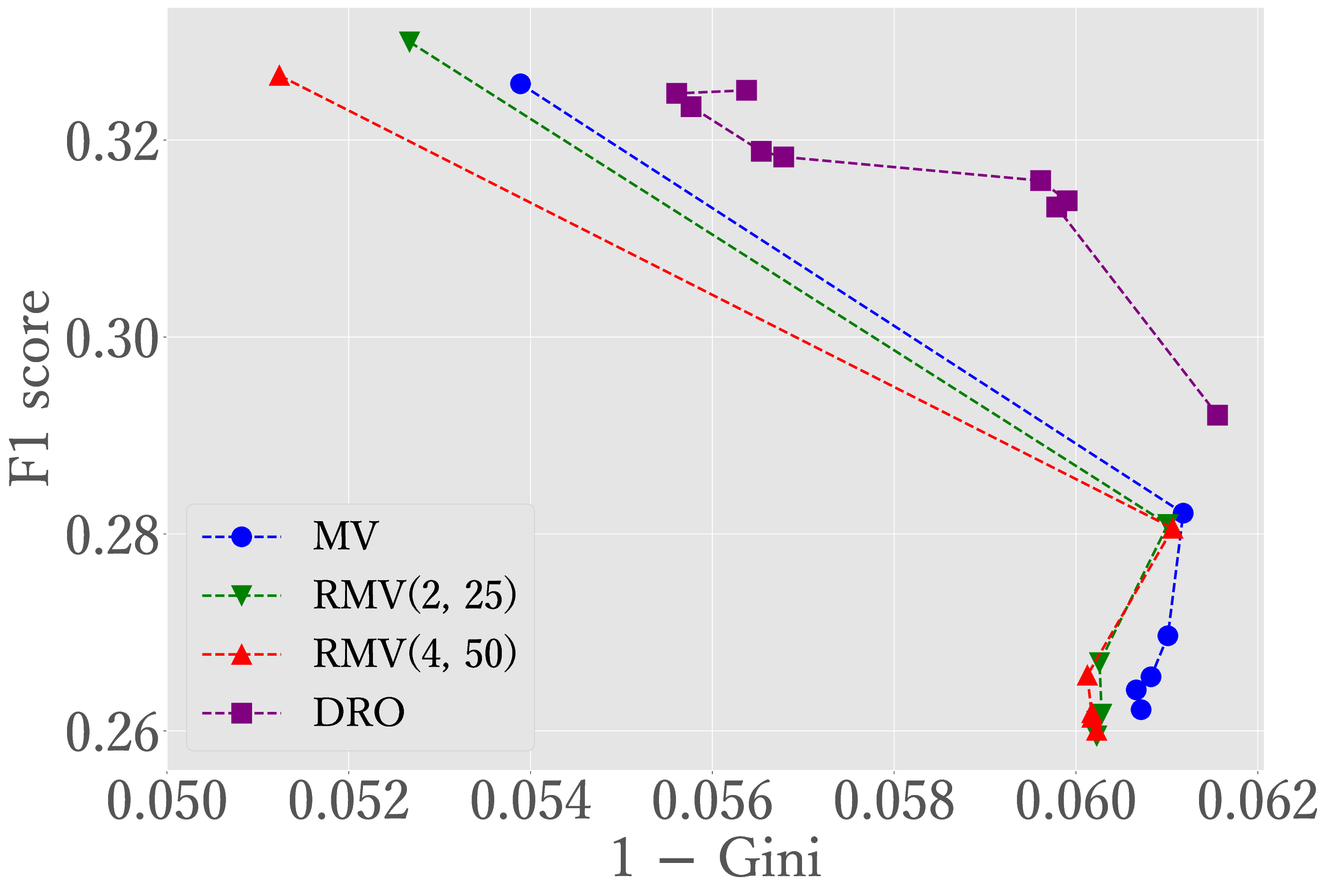}
        \caption{BookCrossing dataset ($N=5$)}
        \label{fig:book_5}
    \end{subfigure}

    \caption{Recommendation quality with the number
of recommended items $N \in \{3, 5\}$}
    \label{fig:result}
\end{figure}

\subsection{Results of Recommendation Quality}\label{subsec:result_rec}

Figure~\ref{fig:result} shows the recommendation quality on the three datasets with the number of recommended items $N \in \{3, 5\}$.
The vertical axis represents the F1 score as a measure of recommendation accuracy, while the horizontal axis represents the value of ``$1 - \mbox{Gini}$'' as a measure of recommendation diversity. 
Therefore, data points located at the top indicate more accurate recommendations, while those located to the right indicate more diverse recommendations.

For the MovieLens dataset (Figures~\ref{fig:movielens_3} and~\ref{fig:movielens_5}), we found a clear negative correlation between the recommendation accuracy and diversity.
This reflects the well-known trade-off that focusing recommendations on popular items increases the accuracy but decreases the diversity.
Nevertheless, our DRO method consistently maintained higher diversity than the other methods at the same accuracy level.
Notably, the difference in the diversity widened as the accuracy decreased.
This result suggests that our DRO method effectively recommended unpopular items with lower recommendation frequencies.

For the Yahoo! R3 dataset (Figures~\ref{fig:r3_3} and~\ref{fig:r3_5}), we observed an atypical positive correlation; that is, recommendation accuracy increased with increasing recommendation diversity.
This implies that for this dataset, including a variety of items in the recommendations can more accurately capture users' potential preferences.
Our DRO method outperformed the other methods in terms of the diversity at the same accuracy level.
This result indicates the superiority of our DRO method, especially when recommendation diversity is prioritized.

For the BookCrossing dataset (Figures~\ref{fig:book_3} and \ref{fig:book_5}), we found a negative correlation between the recommendation accuracy and diversity, similarly to the MovieLens dataset.
In addition, for this dataset, our DRO method achieved high-quality recommendations in terms of the accuracy and diversity.
Notably, even in cases of high diversity, our DRO method maintained higher accuracy than the other methods.
This result suggests that our DRO method can simultaneously achieve high levels of accuracy and diversity in recommendations.

\begin{table*}[!t]
\centering
\caption{Computation times [s] per user for the number
of recommended items $N \in \{3, 5\}$}
\label{tab:result_data_n3}

\begin{subtable}{\textwidth}
\centering
\caption{$N=3$}
\label{tab:N=3}
\scalebox{0.95}{
\begin{tabular}{ccccccc}
\midrule
\multirow{2}{*}[-0.5ex]{Method}
    & \multicolumn{3}{c}{Parameter}
    & \multicolumn{3}{c}{Dataset} \\
\cmidrule(lr){2-4} \cmidrule(lr){5-7} 
    & $\alpha$ & $\kappa_1$ & $\kappa_2$ 
    & MovieLens & Yahoo! R3 & BookCrossing \\
\midrule
\multirow{3}{*}{MV} 
& 0.1 & \multirow{3}{*}{---} & \multirow{3}{*}{---} & 0.01 ($\pm$0.01) & 0.01 ($\pm$0.00) & 0.01 ($\pm$0.01) \\ 
& 0.3 & & & 0.01 ($\pm$0.01) & 0.01 ($\pm$0.00) & 0.01 ($\pm$0.01) \\ 
& 0.5 & & & 0.01 ($\pm$0.01) & 0.01 ($\pm$0.01) & 0.01 ($\pm$0.01) \\ 
\midrule
\multirow{3}{*}{RMV($2, 25$)} 
& 0.1 & \multirow{3}{*}{---} & \multirow{3}{*}{---} & 0.19 ($\pm$0.37) & 0.01 ($\pm$0.02) & 0.11 ($\pm$0.40) \\ 
& 0.3 & & & 0.15 ($\pm$0.29) & 0.01 ($\pm$0.02) & 0.10 ($\pm$0.37) \\ 
& 0.5 & & & 0.15 ($\pm$0.27) & 0.01 ($\pm$0.02) & 0.10 ($\pm$0.36) \\ 
\midrule
\multirow{3}{*}{RMV($4, 50$)} 
& 0.1 & \multirow{3}{*}{---} & \multirow{3}{*}{---} & 0.19 ($\pm$0.37) & 0.01 ($\pm$0.02) & 0.11 ($\pm$0.37) \\ 
& 0.3 & & & 0.15 ($\pm$0.28) & 0.01 ($\pm$0.02) & 0.10 ($\pm$0.35) \\ 
& 0.5 & & & 0.14 ($\pm$0.27) & 0.01 ($\pm$0.02) & 0.10 ($\pm$0.35) \\ 
\midrule
\multirow{4}{*}{DRO} 
& \multirow{4}{*}{---} & 0.1 & 0.1 & 3.31 ($\pm$3.27) & 0.51 ($\pm$0.74) & 2.05 ($\pm$3.45) \\ 
& & 0.1 & 5.0 & 3.31 ($\pm$3.26) & 0.52 ($\pm$0.75) & 2.45 ($\pm$6.30) \\ 
& & 5.0 & 0.1 & 3.82 ($\pm$3.87) & 0.57 ($\pm$0.85) & 2.10 ($\pm$3.50) \\ 
& & 5.0 & 5.0 & 3.20 ($\pm$3.16) & 0.58 ($\pm$1.72) & 1.63 ($\pm$3.08) \\ 
\midrule
\end{tabular}
}
\end{subtable}

\bigskip

\begin{subtable}{\textwidth}
\centering
\caption{$N=5$}
\label{tab:N=5}
\scalebox{0.95}{
\begin{tabular}{ccccccc}
\toprule
\multirow{2}{*}[-0.5ex]{Method}
    & \multicolumn{3}{c}{Parameter}
    & \multicolumn{3}{c}{Dataset} \\
\cmidrule(lr){2-4} \cmidrule(lr){5-7} 
    & $\alpha$ & $\kappa_1$ & $\kappa_2$ 
    & MovieLens & Yahoo! R3 & BookCrossing \\
\midrule
\multirow{3}{*}{MV} 
& 0.1 & \multirow{3}{*}{---} & \multirow{3}{*}{---} & 0.01 ($\pm$0.01) & 0.01 ($\pm$0.00) & 0.01 ($\pm$0.01) \\ 
& 0.3 & & & 0.01 ($\pm$0.01) & 0.01 ($\pm$0.01) & 0.01 ($\pm$0.01) \\ 
& 0.5 & & & 0.02 ($\pm$0.02) & 0.01 ($\pm$0.01) & 0.01 ($\pm$0.01) \\ 
\midrule
\multirow{3}{*}{RMV($2, 25$)} 
& 0.1 & \multirow{3}{*}{---} & \multirow{3}{*}{---} & 0.22 ($\pm$0.50) & 0.02 ($\pm$0.02) & 0.13 ($\pm$0.47) \\ 
& 0.3 & & & 0.23 ($\pm$0.49) & 0.02 ($\pm$0.04) & 0.12 ($\pm$0.40) \\ 
& 0.5 & & & 0.33 ($\pm$0.74) & 0.03 ($\pm$0.05) & 0.11 ($\pm$0.39) \\ 
\midrule
\multirow{3}{*}{RMV($4, 50$)} 
& 0.1 & \multirow{3}{*}{---} & \multirow{3}{*}{---} & 0.22 ($\pm$0.42) & 0.02 ($\pm$0.02) & 0.14 ($\pm$0.50) \\ 
& 0.3 & & & 0.22 ($\pm$0.47) & 0.02 ($\pm$0.03) & 0.11 ($\pm$0.39) \\ 
& 0.5 & & & 0.31 ($\pm$0.73) & 0.02 ($\pm$0.04) & 0.11 ($\pm$0.40) \\ 
\midrule
\multirow{4}{*}{DRO} 
& \multirow{4}{*}{---} & 0.1 & 0.1 & 3.30 ($\pm$3.28) & 0.50 ($\pm$0.72) & 2.03 ($\pm$3.43) \\ 
& & 0.1 & 5.0 & 3.30 ($\pm$3.26) & 0.50 ($\pm$0.73) & 2.21 ($\pm$4.08) \\ 
& & 5.0 & 0.1 & 3.76 ($\pm$3.79) & 0.55 ($\pm$0.81) & 1.98 ($\pm$2.69) \\ 
& & 5.0 & 5.0 & 2.82 ($\pm$2.15) & 0.58 ($\pm$1.71) & 1.63 ($\pm$3.11) \\ 
\midrule
\end{tabular}
}
\end{subtable}
\end{table*}

\subsection{Results of Computational Efficiency}\label{subsec:result_comp}

Table~\ref{tab:result_data_n3} lists the average computation time per user on the three datasets with the number of recommended items $N \in \{3, 5\}$. 
Here, the standard error of the computation time is shown in parentheses.
The risk aversion parameter was selected from $\alpha \in \{0.1, 0.3, 0.5\}$ for the MV and RMV methods, and the ambiguity parameters in Eq.~\eqref{eq:set_unc} were selected from $(\kappa_1, \kappa_2) \in \{(0.1, 0.1), (0.1, 5.0), (5.0, 0.1), (5.0, 5.0)\}$ for our DRO method. 

Overall, the computation time was longer for our DRO method than for the other methods.
This is due to the nature of Algorithm~\ref{alg:PADM}, which requires repeatedly solving SDO subproblems.
However, even under the most computationally intensive conditions, the computation time for our DRO method remained within 4 s. 
Given the requirements for offline computation or near-real-time batch processing in online recommender systems, this level of computational cost can be considered practical.

The risk aversion parameter $\alpha$ had a small and inconsistent effect on the computation time for the MV and RMV methods across different datasets. 
The same was true for the impact of the ambiguity set parameters $(\kappa_1, \kappa_2)$ on the computational efficiency of our DRO method.
On the other hand, increasing the number of recommended items from $N=3$ to $N=5$ tended to slightly increase the computation time of the RMV method and slightly decrease that of our DRO method.

\section{Conclusion}\label{sec:concl}

We proposed a computational DRO framework for selecting effective recommendations. 
We formulated a DRO model with the cardinality constraint to select a specified number of recommendations for each user.
For this optimization model, we developed the PADM algorithm to efficiently find high-quality solutions. 
We also proved the convergence properties of our PADM algorithm based on the trace-constrained and Frobenius-norm-regularized formulations. 

We evaluated the effectiveness of our recommendation selection framework through the computational experiments using the three publicly available datasets. 
Our optimization model succeeded in generating more diverse recommendations while maintaining the same recommendation accuracy as the existing recommendation selection models.
Furthermore, our solution method was able to give the aforementioned high-quality recommendations for each user in just a few seconds. 

This study opens up new possibilities for applying DRO techniques to recommender systems. 
Importantly, our method has the potential to improve the recommendation quality of various neighborhood-based and model-based methods for collaborative filtering. 

A future direction of study will be to select recommendations simultaneously for all users, under the constraint of overall recommendation quality, rather than selecting recommendations separately for each user.
Achieving this goal would involve dealing with a huge-scale optimization problem, so we need to devise algorithms that are more scalable.
Another direction of future research will be to develop more practical methods that consider fairness among items~\cite{PiSt22,UeIk24} and each user's privacy~\cite{HiSo22,YaIk25} in addition to recommendation diversity.



\backmatter



\bibliography{Ref}

\end{document}